\numberwithin{equation}{section}
\newtheorem{theorem}{Theorem}[section]
\newtheorem{definition}[theorem]{Definition}
\newtheorem{proposition}[theorem]{Proposition}
\newtheorem{corollary}[theorem]{Corollary}
\newtheorem{lemma}[theorem]{Lemma}
\newtheorem{remark}[theorem]{Remark}
\newcommand{\cali}[1]{\mathscr{#1}}
\newcommand{\ddc}{{dd^c}}
\newcommand{\dc}{{d^c}}
\newcommand{\dbar}{{\overline\partial}}
\newcommand{\ddbar}{{\partial\overline\partial}}
\newcommand{\PSH}{{\rm PSH}}
\newcommand{\capa}{{\rm cap}}
\newcommand{\Cc}{\cali{C}}
\newcommand{\C}{\mathbb{C}}
\newcommand{\R}{\mathbb{R}}
\renewcommand{\P}{\mathbb{P}}
\newcommand{\cH}{\mathcal{H}}
\newcommand{\1}{{\bf 1}}
\renewcommand{\[}{\begin{equation}}
\renewcommand{\]}{\end{equation}}
\newcommand{\red}[1]{{\color{red}#1}}
\title[Complex Sobolev space and Monge-Amp\`ere equations]{The complex Sobolev space and H\"older continuous solutions to Monge-Amp\`ere equations}
\author{Tien-Cuong Dinh}
\address{Department of Mathematics, National University 
of Singapore, 10 Lower Kent Ridge Road, Singapore 119076.}
\email{matdtc@nus.edu.sg}
\author{S\l awomir Ko\l odziej}
\address{Faculty of Mathematics and Computer Science, Jagiellonian University, \L ojasiewicza 6, 30-348 Krak\'ow, Poland.}
\email{slawomir.kolodziej@im.uj.edu.pl}
\author{Ngoc Cuong Nguyen}
\address{Department of Mathematical Sciences, KAIST, 291 Daehak-ro, Yuseong-gu, Daejeon
34141, South Korea.}
\email{cuongnn@kaist.ac.kr} 
\begin{document}

\maketitle

\begin{abstract}
Let $X$ be a compact K\"ahler manifold of dimension $n$ and $\omega$ a K\"ahler form on $X$. We consider the complex Monge-Amp\`ere equation 
$(\ddc u+\omega)^n=\mu$, where $\mu$ is a given positive measure on $X$ of suitable mass and $u$ is an $\omega$-plurisubharmonic function. We show that the equation admits a H\"older continuous solution {\it if and only if} the measure $\mu$, seen as a functional on a complex Sobolev space $W^*(X)$, is H\"older continuous. A similar result is also obtained for the complex Monge-Amp\`ere equations on domains of $\C^n$.
\end{abstract}

\medskip\medskip

\noindent
{\bf MSC 2010:} 32Uxx, 32W20, 46E35.

 \medskip

\noindent
{\bf Keywords:} Monge-Amp\`ere equation, plurisubharmonic function, complex Sobolev space, capacity. 


\section{Introduction} \label{s:intro}

Let $X$ be a compact K\"ahler manifold of dimension $n$ and $\omega$ a K\"ahler form on $X$. 
Throughout the paper, we assume that $\int_X\omega^n=1$ but we can easily extend the results to the case where $\omega$ is not normalized.
Recall that $d=\partial +\dbar$, $d^c:={1\over 2i\pi}(\partial -\dbar)$ and $\ddc={i\over \pi}\ddbar$. Recall also that 
a {\it quasi-plurisubharmonic} ({\it quasi-psh} for short) function on $X$ is locally the difference of a psh function and a smooth one. A quasi-psh function $u$ on $X$ is said to be {\it $\omega$-psh} if $\ddc u\geq -\omega$ or equivalently $\ddc u+\omega$ is a closed positive  $(1,1)$-current.
When $u$ is a bounded $\omega$-psh function, the wedge-product $(\ddc u+\omega)^n$ is a well-defined positive measure on $X$, see e.g. \cite{BT82, ko05}. 

In this article, we consider the complex Monge-Amp\`ere equation 
$$(\ddc u+\omega)^n=\mu,$$  
where $\mu$ is a given positive measure and $u$ is a bounded $\omega$-psh function on $X$ (all measures we consider in this paper are Borel measures of finite mass). 
This important equation plays a central role in complex geometry.
By taking the integral on $X$ and using Stokes' theorem, we deduce from the equation that the mass of $\mu$ satisfies
\begin{equation*} 
\|\mu\|=\int_X\omega^n=1.
\end{equation*}

From now on, we assume this condition which is necessary to solve the above equation.
Our aim is to give a new criterion so that this equation admits a H\"older continuous solution $u$.
A famous classical theorem of Yau says that when $\mu$ is given by a smooth volume form then the equation admits a smooth solution \cite{Yau}. More recently, the case of continuous and H\"older continuous solutions has been intensively studied. We refer the reader to \cite{DDGKPZ14, Dinew, DZ, DN, GKZ08, Hiep, ko98, ko08, KN, N18, N20, Vu} for more details. Some of these results will be recalled later in the present article.

Denote by $W^{1,2}(X)$ the Sobolev space of real valued functions $f$ on $X$ such that both $f$ and $df$ are of class $L^2$. Recall that {\it the complex Sobolev space} 
$W^*(X)$, introduced by Sibony and the first author in \cite{DS}, is the space of all functions $f\in W^{1,2}(X)$ such that 
\begin{equation*} 
df\wedge d^c f\leq T
\end{equation*}
for some closed positive $(1,1)$-current $T$ on $X$.
By \cite{Vig}, this is a Banach space with the norm
$$\|f\|_*:=\|f\|_{L^1(X)} + \min\big\{ \|T\|^{1/2} : \ T \text{ as above}\big\}.$$
Here, the mass of $T$ is defined by $\|T\|:=\|T\wedge\omega^{n-1}\|:=\langle T,\omega^{n-1}\rangle$.
Note that by Poincar\'e-Wirtinger inequality, if we replace $L^1(X)$ in the definition of $\|\cdot\|_*$ by $L^2(X)$, we obtain an equivalent norm.
On each bounded subset of $W^*(X)$ for $\|\cdot\|_*$ norm,
the $L^1$ norm induces a natural distance whose topology coincides with the weak topology, see \cite{DMV}. 

\begin{definition} \rm \label{d:W-Holder}
Let $\mu$ be a measure of finite mass on $X$. We say that $\mu$ is {\it $W^*(X)$-H\"older continuous} if there are positive constants $c$ and $\alpha$ such that 
$$|\mu(f)|\leq c \|f\|_{L^1(X)}^\alpha$$
for every smooth function $f$ on $X$ such that $\|f\|_*\leq 1$. Here, $\mu(f)$ denotes the usual integral of $f$ with respect to $\mu$.
\end{definition}

We will see later in Remark \ref{r:final} below that such a measure $\mu$ extends to a functional on whole $W^*(X)$ and the above inequality holds for all $f\in W^*(X)$. So $\mu$ is $W^*(X)$-H\"older continuous if and only if it defines a functional on $W^*(X)$ which is H\"older continuous with respect to the $L^1$-distance on every $\|\cdot\|_*$-bounded set. Here is our main result.

\begin{theorem}\label{t:main} 
Let $X$ be a compact K\"ahler manifold of dimension $n$ and $\omega$ a K\"ahler form on $X$ normalized so that $\int_X\omega^n=1$. 
Let $\mu$ be a probability measure on $X$. Then 
$\mu$ is $W^*(X)$-H\"older continuous if and only if there exists a H\"older continuous $\omega$-psh function $u$ solving the complex Monge-Amp\`ere equation $(dd^c u+\omega)^n=\mu$. 
\end{theorem}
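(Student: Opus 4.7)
The plan is to prove the two implications separately. Throughout, the interplay between Stokes' theorem, Demailly-type regularization of $\omega$-psh functions, and the complex Sobolev norm $\|\cdot\|_*$ is central.

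\textbf{Necessity.} Suppose $u$ is $\beta$-Hölder continuous and solves $(\ddc u+\omega)^n=\mu$, and let $f$ be smooth with $\|f\|_*\leq 1$. I would first replace $u$ by its Demailly regularizations $u_\epsilon$, which satisfy $\|u-u_\epsilon\|_{L^\infty}\lesssim \epsilon^\beta$ while $u_\epsilon$ is smooth with quantitative growth of $\ddc u_\epsilon$. Writing
$$\mu(f)=\int_X f(\ddc u_\epsilon+\omega)^n+\int_X f\bigl[(\ddc u+\omega)^n-(\ddc u_\epsilon+\omega)^n\bigr],$$
the first integral is bounded by $\|f\|_{L^1}\cdot \|(\ddc u_\epsilon+\omega)^n\|_{L^\infty}$. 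I would expand the bracketed difference telescopically into terms of the form $\ddc(u-u_\epsilon)\wedge R_j$, where each $R_j$ is a wedge product of closed positive $(1,1)$-currents of bounded mass. Stokes' theorem converts these into $-\int df\wedge d^c(u-u_\epsilon)\wedge R_j$, which by Cauchy--Schwarz is dominated by
$$\Bigl(\int df\wedge d^c f\wedge R_j\Bigr)^{1/2}\Bigl(\int d(u-u_\epsilon)\wedge d^c(u-u_\epsilon)\wedge R_j\Bigr)^{1/2}.$$
The first factor is $\lesssim \|f\|_*$ by the very definition of the Sobolev norm, while the second, after another integration by parts against the closed positive current $R_j$, is controlled by a power of $\|u-u_\epsilon\|_{L^\infty}$. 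Optimizing $\epsilon$ against $\|f\|_{L^1}$ gives the desired estimate $|\mu(f)|\leq c\|f\|_{L^1}^\alpha$.

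\textbf{Sufficiency, step 1: capacity domination and existence.} I would show that the $W^*(X)$-Hölder condition implies a polynomial bound $\mu(K)\leq C\,\capa(K)^{\alpha'}$ for every Borel set $K\subset X$. The mechanism is to take the relative extremal $\omega$-psh potential of $K$ (or a suitable truncation), approximate it by smooth functions $g$ whose $\|\cdot\|_*$-norm is controlled by $\capa(K)$ up to normalization, and test $\mu$ against $g$: the hypothesis then yields $\mu(K)\leq \mu(g)\leq c\|g\|_{L^1}^\alpha\cdot(\text{norm factor})$, which after interpolating between the $L^1$ and $\|\cdot\|_*$ norms translates into the desired polynomial capacity bound. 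With this estimate in hand, Ko\l odziej's theorem produces a continuous $\omega$-psh solution $u$ of $(\ddc u+\omega)^n=\mu$.

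\textbf{Sufficiency, step 2: Hölder upgrade.} To upgrade continuity to Hölder regularity, I would regularize $\mu$ into smooth probability measures $\mu_j$ at scale $2^{-j}$, solve $(\ddc u_j+\omega)^n=\mu_j$ by Yau's theorem to obtain smooth $u_j$, and use Ko\l odziej--Dinew-type stability estimates, fed by the capacity domination from step 1, to obtain a geometric rate $\|u_j-u\|_{L^\infty}\leq C\,2^{-j\gamma}$ for some $\gamma>0$. Coupling this with standard modulus-of-continuity bounds on $u_j$ inherited from the regularization scale, a Kiselman-type argument (or direct comparison of $u$ with $\sup$-convolutions) forces $u$ itself to be Hölder continuous. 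The main obstacle will be step 1 of sufficiency: extracting the concrete capacity bound $\mu(K)\leq C\,\capa(K)^{\alpha'}$ from the abstract Sobolev-type hypothesis, which requires producing explicit smooth approximants of extremal potentials whose $W^*(X)$-norm, $L^1$-norm, and proximity to $\mathbf{1}_K$ are all simultaneously controlled by the capacity of $K$.
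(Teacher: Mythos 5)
Your necessity argument and Step 1 of your sufficiency argument follow essentially the paper's own route. The regularization/telescoping/Stokes/Cauchy--Schwarz scheme for showing that $(\ddc u+\omega)^n$ is $W^*$-H\"older continuous is Proposition \ref{p:h2-h6} (carried out there locally, by convolution and by induction on the number of $\ddc u_i$ factors, with the same optimization of $\varepsilon$ against $\|f\|_{L^1}$). The capacity bound obtained by testing $\mu$ against truncations $\max\{v,-s+1\}-v$ of extremal potentials, whose $\|\cdot\|_*$-norm is controlled via Lemma \ref{l:w-w-norm-x} and whose $L^1$-norm decays exponentially by Tian's inequality, is exactly Lemma \ref{l:cap-vol} together with the proof of (c3) $\Rightarrow$ (c2) in Theorem \ref{t:MA-C0}; Ko\l odziej's theorem then yields a continuous solution.

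The genuine gap is in Step 2 of sufficiency. First, the capacity domination $\mu(K)\le C\,\capa_\omega(K)^{1+\alpha}$ alone does \emph{not} imply H\"older continuity of the potential: this is precisely why the characterization (h4)$'$ quoted from \cite{KN} requires, in addition to $\mu\in\cH(\alpha)$, that $\mu$ be $\PSH(X,\omega,[-1,0])$-H\"older continuous. Consequently a stability estimate ``fed by the capacity domination from step 1'' cannot by itself produce the geometric rate $\|u_j-u\|_{L^\infty}\le C\,2^{-j\gamma}$: the Ko\l odziej--Dinew--Zhang type stability inequalities bound $\|u_j-u\|_{L^\infty}$ by a power of an $L^1(\mu)$-type quantity which must itself be shown to decay geometrically, and making that quantity small is exactly where the $W^*(X)$-H\"older hypothesis has to be invoked a \emph{second} time --- your Step 2 never uses it. Second, regularizing the measure rather than the solution is problematic: a mollification $\mu_j$ of a general measure at scale $2^{-j}$ has density of order $2^{2nj}$, so the moduli of continuity of the Yau solutions $u_j$ are not ``inherited from the regularization scale''; the available a priori bounds on $u_j$ degenerate as $j\to\infty$ and give no uniform control to combine with $\|u_j-u\|_{L^\infty}$.

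The paper closes the argument by regularizing $u$ itself with Demailly's kernel: $\ddc \rho_\delta u\ge -A\omega$ and $\|\rho_\delta u-u\|_{L^1}\le c\delta^2$, so $A^{-1}\rho_\delta u$ is an admissible test object of bounded $\|\cdot\|_*$-norm, and the $W^*(X)$-H\"older continuity of $\mu$ applied to $A^{-1}(\rho_\delta u - u)$ gives $\int_X|\rho_\delta u-u|\,d\mu\le c\,\delta^{2\alpha}$. This is the input that the known stability argument (see \cite[Proposition~3.3]{DDGKPZ14}) converts into $\|\rho_\delta u-u\|_{L^\infty}\lesssim\delta^{2\alpha/(n+1)}$, hence into H\"older continuity of $u$. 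You need this second use of the hypothesis, or an equivalent (the paper's other route goes through (h5) $\Rightarrow$ (h4) and the characterization (h4)$'$ of \cite{KN}), to complete the proof.
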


We also obtain the following  local version of this theorem. Consider a smooth bounded and strictly pseudoconvex domain $\Omega$ in $\C^n$. 
Let $\omega$ be the standard K\"ahler form on $\C^n$. The Sobolev space $W^{1,2}(\Omega)$ consists of real valued functions $f$ such that $f$ and the coefficients of $df$ belong to $L^2(\Omega)$.  We define {\it the complex Sobolev space} $W^*(\Omega)$ as the space of all functions $f\in W^{1,2}(\Omega)$ such that 
$$df \wedge d^c f \leq T$$
for some closed positive $(1,1)$-current of finite mass $T$ on $\Omega$. This is also a Banach space for the norm
$$\|f\|_{*,\Omega} = \|f\|_{L^1(\Omega)} + \min \big\{\|T\|^{1/2}_{\Omega} : T \text{ as above} \big\},$$
where the mass of $T$ is defined by $\|T\|_\Omega := \|T\wedge\omega^{n-1}\|:=\int_\Omega T \wedge \omega^{n-1}$, see \cite{Vig}.

\begin{definition} \rm \label{d:W-Holder-local}
Let $\mu$ be a Borel measure of finite mass  and compact support on $\Omega$. We say that $\mu$ is {\it $W^*(\Omega)$-H\"older continuous} if there are positive constants $c$ and $\alpha$ such that 
$$|\mu(f)|\leq c \|f\|_{L^1(\Omega)}^\alpha$$
for every smooth function $f$ on $\Omega$ such that $\|f\|_{*,\Omega}\leq 1$. 
\end{definition}

Here is our second main theorem.

\begin{theorem}\label{t:main-local} 
Let $\Omega$ be a smooth bounded and strictly pseudoconvex domain in $\C^n$.
Let $\mu$ be a positive Borel measure with finite mass and compact support in $\Omega$. Then,  $\mu$ is $W^*(\Omega)$-H\"older continuous 
if and only if $\mu=(dd^c u)^n$ on $\Omega$ for a H\"older continuous function $u$ in $\overline\Omega$ which is psh on $\Omega$ and vanishes 
on the boundary $b\Omega$ of $\Omega$. 
\end{theorem}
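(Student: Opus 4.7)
The plan is to follow the strategy used for Theorem \ref{t:main}, adapting it to the Dirichlet problem on a strictly pseudoconvex domain; the condition $u|_{b\Omega}=0$ now plays the role that the closedness of $X$ did in the compact case. For the easy implication (solvability by a Hölder $u$ implies $\mu$ is $W^*(\Omega)$-Hölder), I fix a smooth $f$ with $\|f\|_{*,\Omega}\le 1$ and an associated closed positive current $T$ with $df\wedge d^c f\le T$, $\|T\|_\Omega\le 1$. Mollify $f$ to $f_\delta$ in local coordinates and decompose $\mu(f)=\mu(f_\delta)+\mu(f-f_\delta)$. Since $\supp\mu\Subset\Omega$ and $u$ is Hölder on $\overline\Omega$, $|\mu(f-f_\delta)|$ is controlled by a positive power of $\delta$. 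For the main term $\mu(f_\delta)=\int f_\delta(dd^cu)^n$, I integrate by parts $n$ times: all boundary contributions vanish because $u|_{b\Omega}=0$ and $f_\delta$ is smooth, and the remaining integrals are estimated via $\|u\|_{L^\infty}$, Cauchy--Schwarz applied to $df_\delta\wedge d^c f_\delta\le T_\delta$, and Chern--Levine--Nirenberg-type mass bounds for $(dd^cu)^k$. Optimizing over $\delta$ yields $|\mu(f)|\le c\|f\|_{L^1(\Omega)}^\gamma$ for some $\gamma>0$.

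For the converse, my plan is to reduce $W^*(\Omega)$-Hölder continuity of $\mu$ to a capacity-type bound
\[
\mu(K)\le C\,\capa_\Omega(K)^{1+\eta}\qquad\text{for every compact }K\subset\Omega,
\]
where $\capa_\Omega$ is the Bedford--Taylor capacity, and then invoke the established Hölder-regularity theory for the Dirichlet problem in strictly pseudoconvex domains (see \cite{GKZ08,Hiep,KN,N20}) to produce the desired Hölder continuous psh $u$ on $\overline\Omega$ with $u|_{b\Omega}=0$ and $(dd^cu)^n=\mu$. To establish the capacity estimate I would first prove a local Dinh--Sibony bound: every $v\in\PSH(\Omega)\cap[-1,0]$ satisfies $\|v\|_{*,\Omega}\le C_0$ for a constant $C_0$ independent of $v$. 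Applying this to the relative extremal function $u_K^*$ for $K\Subset\Omega$ (which lies in $[-1,0]$, equals $-1$ quasi-everywhere on $K$, and satisfies $\capa_\Omega(K)=\int_\Omega(dd^cu_K^*)^n$), the Hölder hypothesis yields
\[
\mu(K)\le\mu(-u_K^*)\le c\,\|u_K^*\|_{L^1(\Omega)}^\alpha.
\]
A Chebyshev-type argument on the sublevel sets $\{u_K^*<-t\}$, together with the comparison principle, then bounds $\|u_K^*\|_{L^1(\Omega)}$ by a positive power of $\capa_\Omega(K)$ and produces the desired inequality with the appropriate exponent.

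The main obstacle I anticipate is twofold. First, the uniform $W^*$-norm bound on $\PSH(\Omega)\cap[-1,0]$ is subtler than its compact counterpart: one must control $dv\wedge d^c v$ up to the boundary, and strict pseudoconvexity---through a smooth strictly psh defining function---appears essential so that Chern--Levine--Nirenberg estimates apply with constants independent of $v$. Second, obtaining a capacity exponent strictly greater than $1$---which is what separates Hölder from merely continuous solutions---requires a careful balance between the Hölder exponent $\alpha$ coming from the hypothesis and the power of $\capa_\Omega(K)$ produced by the estimate on $\|u_K^*\|_{L^1(\Omega)}$; this may force an iteration or a refinement of the extremal-function argument. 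Once both points are settled, the equivalence follows by combining the capacity bound with the cited Dirichlet-problem results.
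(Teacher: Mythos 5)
Your two-directional plan shares the paper's general philosophy (regularization plus integration by parts for the ``easy'' direction; capacity estimates via extremal functions plus known solvability theory for the converse), but each direction has a genuine gap. Note first that the paper does not attack the local statement head-on: it identifies $\Omega$ with an open subset of an affine chart of $\P^n$ and deduces Theorem \ref{t:main-local} from the global equivalences (h1)--(h6), using the local characterization (h2) from \cite{DN,KN}; this sidesteps most of the boundary issues you anticipate. The gap in your forward direction is the error term: you mollify the test function $f$ and assert that $|\mu(f-f_\delta)|$ is controlled by a power of $\delta$ ``since $\supp\mu\Subset\Omega$ and $u$ is H\"older.'' This is unjustified and essentially circular. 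The function $f-f_\delta$ is small only in $L^1$, with $\|f-f_\delta\|_{*,\Omega}$ merely bounded, so bounding $\mu(f-f_\delta)$ by a power of $\|f-f_\delta\|_{L^1}$ is exactly the $W^*(\Omega)$-H\"older continuity of $\mu$ that you are trying to prove; H\"older continuity of $u$ gives no direct control of $\int(f-f_\delta)\,(dd^cu)^n$ for a general bounded integrand. The paper's Proposition \ref{p:h2-h6} regularizes the \emph{potential} instead, splitting $dd^cu=dd^cu_\varepsilon+dd^c(u-u_\varepsilon)$ and running an induction on the number of $dd^cu_j$ factors: the smallness in $\|f\|_{L^1}$ enters through the trivial base case $\int_K f\,\omega^n\le\|f\|_{L^1}$ and propagates via the tradeoff $\|u_\varepsilon\|_{C^2}\lesssim\varepsilon^{-2}$ versus $\|u-u_\varepsilon\|_{L^\infty}\lesssim\varepsilon^\gamma$ with $\varepsilon$ chosen as a power of $\|f\|_{L^1}$. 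Your integration by parts and Cauchy--Schwarz steps do appear there, but applied to $dd^c(u-u_\varepsilon)$, where the needed $L^\infty$-smallness is actually available.

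The gap in your converse is the finishing move. A capacity estimate $\mu(K)\le C\,\capa(K)^{1+\eta}$ with $\eta>0$ yields only a \emph{continuous} solution (this is (c2) $\Longrightarrow$ (c1) of Theorem \ref{t:MA-C0}, from \cite{EGZ09, ko98}); contrary to what you write, a capacity exponent greater than $1$ does not separate H\"older from continuous. The criterion from \cite{KN} that the paper actually invokes is the conjunction (h4)': $\mu\in\cH(\alpha)$ \emph{together with} the $\PSH(\cdot,[-1,0])$-H\"older continuity of $\mu$. So you must additionally extract from the $W^*$-H\"older hypothesis the H\"older continuity of $\mu$ on bounded psh functions --- which does follow, via the uniform bound $\|v\|_*\le\|v\|_{L^1}+\|v\|_{L^\infty}$ of Lemma \ref{l:w-w-norm-x} combined with Demailly regularization --- or else use the paper's alternative bootstrap: first produce a continuous solution $u$, then apply the $W^*$-H\"older continuity of $\mu$ to $\rho_\delta u-u$ (Demailly's regularization) to obtain $\int_X|\rho_\delta u-u|\,d\mu\lesssim\delta^{2\alpha}$ and conclude H\"older continuity of $u$ by the stability argument of \cite{DDGKPZ14}. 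Your extremal-function and Chebyshev computation for the capacity bound itself is sound and parallels Lemma \ref{l:cap-vol} and Proposition \ref{p:h-alpha}, and your worry about the uniform $W^*$-bound on $\PSH(\Omega)\cap[-1,0]$ up to the boundary is legitimate --- it is precisely what the reduction to $\P^n$ avoids --- but without the extra H\"older ingredient the argument stops at continuity.
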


In order to prove that $(\ddc u+\omega)^n$ (resp. $(\ddc u)^n$ in the local setting), with $u$ H\"older continuous, satisfies the $W^*$-H\"older continuity in Definition \ref{d:W-Holder} (resp. Definition \ref{d:W-Holder-local}), we use some idea from the theory of interpolation between Banach spaces. More precisely, we write $u=u_\epsilon+(u-u_\epsilon)$ for a suitable regularization $u_\epsilon$ of $u$ for which we control the norms $\|u_\epsilon\|_{\Cc^2}$ and $\|u-u_\epsilon\|_\infty$. We then use the techniques of integration by parts in order to obtain the desired estimates. 

The converse implications are more delicate. Assuming the $W^*$-H\"older continuity of $\mu$ or the H\"older continuity of $\mu$ on a suitable family of quasi-psh functions, we first show a continuity of $\mu$ with respect to some notion of capacity, see Proposition \ref{p:h-alpha} below. This allows us to apply a result in \cite{KN} to conclude. Alternatively, we can solve the complex Monge-Amp\`ere equation with a continuous solution $u$, see Theorem \ref{t:MA-C0} below. We then use again the $W^*$-H\"older continuity of $\mu$ to obtain an estimate on $\|\rho_\delta u -u\|_\infty$, where 
$\rho_\delta u$ is a regularization of $u$ introduced by Demailly \cite{Demailly}. This estimate implies that $u$ is in fact H\"older continuous.

The paper is organized as follows. In Section \ref{s:known}, we will recall some known results and describe the structure of the proof. In Section \ref{s:MA-C0}, we discuss some sufficient conditions on $\mu$ so that the complex Monge-Amp\`ere equation admits a continuous solution $u$. 
Section \ref{s:proof} is devoted to the proofs of the main results stated above. Finally, in Section 5, a natural capacity for the  complex Sobolev space $W^*$,   studied by Vigny \cite{Vig},  is estimated in terms of the Bedford-Taylor capacity. This supplements Vigny's result, which gives a bound in the other direction.

\medskip\noindent 
\textbf{Acknowledgements.} 
The first author is supported by the NUS and MOE grants R-146-000-248-114 and MOE-T2EP20120-0010.
The second and third authors are partially supported by NCN grant  
2017/27/B/ ST1/01145. The third author is also partially supported by the start-up grant G04190056 of KAIST and the National Research Foundation of Korea (NRF) grant No. 2021R1F1A1048185.

\section{Structure of the proof and some partial results} \label{s:known}

Let $X,n$ and $\omega$ be as in the Introduction. Denote by $\PSH(X,\omega)$ the cone of all $\omega$-psh functions on $X$.
Denote also by $\PSH(X,\omega,[-1,0])$ the set of $\omega$-psh functions $v$ such that $-1\leq v\leq 0$. 

\begin{definition} \rm \label{d:Holder-PSH}
A positive measure $\mu$ on $X$ is said to be 
{\it $\PSH(X,\omega)$-H\"older continuous} (resp. {\it $\PSH(X,\omega,[-1,0])$-H\"older continuous}) if there are positive constants $c$ and $\alpha$ such that 
$$|\mu(f_1)-\mu(f_2)|\leq c \|f_1-f_2\|_{L^1(X)}^\alpha$$
for all functions $f_i\in \PSH(X,\omega)$ with $\max_X f_i=0$ (resp. $f_i\in \PSH(X,\omega, [-1,0])$). 
\end{definition}

Note that the first notion is equivalent to say that $\mu$ has a H\"older continuous super-potential and it is stronger than the second notion, 
see \cite{DN} for details.

We call a {\it smooth strictly pseudoconvex coordinate patch} of $X$ any smooth domain $\Omega\subset X$ such that there is a smooth bijective map $\pi:\overline\Omega\to\overline U$ which is holomorphic on $\Omega$, where $U$ is a smooth bounded strictly pseudoconvex domain 
in $\C^n$. Denote by $\PSH(\Omega)$ the cone of all psh functions on $\Omega$. If $K$ is a subset of $X$ denote by $\1_K$ the characteristic function of $K$. The following result was obtained in \cite{DN, KN}, see also \cite{N18,N20}.

\begin{theorem} \label{t:DN-KN}  
Let $X$ be a compact K\"ahler manifold of dimension $n$ and $\omega$ a K\"ahler form on $X$ normalized so that $\int_X\omega^n=1$. 
Let $\mu$ be a probability measure on $X$. Then the following properties are equivalent.
\begin{itemize}
\item[(h1)] $\mu = (dd^c u+\omega)^n$ for a H\"older continuous $\omega$-psh function $u$ on $X$;
\item[(h2)] There is a number $0<\alpha<1$ such that for every strictly pseudoconvex coordinate patch $\Omega\subset X$ and any compact set $K \Subset \Omega$, we have $\1_K \!\cdot\!\mu = (dd^c u)^n$ for some function $u \in \PSH(\Omega)\cap C^{\alpha}(\overline\Omega)$;
\item[(h3)] $\mu$ is $\PSH(X,\omega)$-H\"older continuous.
\end{itemize}
\end{theorem}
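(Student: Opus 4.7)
The plan is to establish the cycle (h1) $\Rightarrow$ (h3) $\Rightarrow$ (h1) together with the local equivalence (h2) $\Leftrightarrow$ (h1). The direction (h1) $\Rightarrow$ (h3) is the analytic direction (integration by parts plus smoothing), while (h3) $\Rightarrow$ (h1) is the pluripotential/existence direction via capacity estimates and Ko\l odziej's program. The local statement (h2) is handled by a coordinate-chart reduction.

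For (h1) $\Rightarrow$ (h3), assume $\mu = (dd^c u + \omega)^n$ with $u$ H\"older continuous $\omega$-psh of exponent $\beta$. Given $f_1,f_2 \in \PSH(X,\omega)$ normalized by $\max f_i = 0$, set $g = f_1 - f_2$ and expand
\begin{equation*}
(dd^c u + \omega)^n - \omega^n \;=\; dd^c u \wedge \sum_{k=0}^{n-1}(dd^c u + \omega)^k \wedge \omega^{n-1-k}.
\end{equation*}
Testing $g$ against this identity and integrating by parts transfers $dd^c$ from $u$ onto $g$, producing terms involving the positive currents $dd^c f_i + \omega$ whose masses in the K\"ahler class are universally bounded. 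Since $g$ is only quasi-psh rather than smooth, I would approximate $u$ at scale $\epsilon$ by $u_\epsilon$ satisfying $\|u-u_\epsilon\|_\infty \lesssim \epsilon^\beta$ and $\|u_\epsilon\|_{C^2} \lesssim \epsilon^{\beta-2}$; the first bound controls the $u-u_\epsilon$ remainder paired against a bounded-mass current, while the second pairs with $\|g\|_{L^1}$ through the $C^2$-norm of $u_\epsilon$. Optimizing in $\epsilon$ yields a H\"older estimate in $\|g\|_{L^1}$ with exponent depending only on $\beta$ and $n$.

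For (h3) $\Rightarrow$ (h1), I would implement Ko\l odziej's scheme. First, H\"older continuity of $\mu$ on normalized $\omega$-psh functions forces a volume--capacity domination $\mu(E) \leq C\,\capa_\omega(E)^{1+\tau}$: evaluate $\mu$ against the extremal function $V^*_{E,\omega}$, which after rescaling lies in the class controlled by (h3), and combine with the standard Alexander--Taylor type comparison. Then Ko\l odziej's $L^\infty$ theorem yields a bounded $\omega$-psh $u$ solving $(dd^c u+\omega)^n = \mu$. To upgrade $u$ to H\"older continuity, I would compare it with its Demailly regularizations $\rho_\delta u$: a quantitative bound on $\int(\rho_\delta u - u)\,d\mu$ in terms of $\delta$, obtained again from (h3), combined with the $L^\infty$ stability of the Monge--Amp\`ere operator, produces a H\"older modulus for $u$ itself. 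The equivalence (h2) $\Leftrightarrow$ (h1) is then handled by localization: the forward direction solves the local Dirichlet problem for $\mathbf{1}_K\mu$ by comparison with the restriction of the global solution, and the reverse direction uses a finite cover of $X$ by patches and the local H\"older bounds to produce a global volume--capacity estimate feeding back into the scheme above.

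The main obstacle I anticipate is the regularity upgrade in the last step of (h3) $\Rightarrow$ (h1): quantifying the modulus of continuity of the merely bounded solution requires tracking how the Demailly regularization scheme interacts with the capacity domination, and verifying that the exponent $\tau$ survives the stability estimate with a positive quantitative gain. A second subtle point is that the normalization $\max f_i = 0$ in (h3) is essential, since without it the masses of the currents attached to $f_i$ would not be controlled and the integration-by-parts bookkeeping in (h1) $\Rightarrow$ (h3) would lose its quantitative meaning.
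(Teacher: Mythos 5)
First, a remark on what ``the paper's own proof'' is here: the paper does not prove Theorem \ref{t:DN-KN} at all --- it quotes it from \cite{DN,KN}, with \cite{N18,N20} supplying the local statement (h2). So the fair comparison is with those references and with the parallel arguments the paper does write out for its new equivalences. Your (h3) $\Rightarrow$ (h1) plan is essentially correct and coincides with the paper's own proof of (h5) $\Rightarrow$ (h1): capacity domination obtained by testing $\mu$ against rescaled extremal functions (Proposition \ref{p:h-alpha} and Theorem \ref{t:MA-C0}), Ko{\l}odziej's theorem to get a continuous solution, then the Demailly regularization $\rho_\delta u$ and the stability estimate of \cite[Prop.~3.3]{DDGKPZ14} to upgrade to H\"older. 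Your (h1) $\Rightarrow$ (h3) plan is also the right idea (interpolation between $\|u-u_\epsilon\|_\infty\lesssim\epsilon^\beta$ and $\|u_\epsilon\|_{C^2}\lesssim\epsilon^{\beta-2}$), but the one-shot telescoping does not close: after integrating by parts back you face $\int g\, \ddc u_\epsilon\wedge S$ with $S=\sum_k(\ddc u+\omega)^k\wedge\omega^{n-1-k}$, and the bound $|\ddc u_\epsilon|\lesssim\epsilon^{\beta-2}\omega$ only reduces this to $\epsilon^{\beta-2}\int|g|\,\omega\wedge S$. For $k\geq 1$ the measures $(\ddc u+\omega)^k\wedge\omega^{n-k}$ are in general singular with respect to $\omega^n$, so this is \emph{not} $\epsilon^{\beta-2}\|g\|_{L^1}$. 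One must run an induction on the number of factors of $\ddc u$, establishing $\int|g|(\ddc u+\omega)^k\wedge\omega^{n-k}\leq c_k\|g\|_{L^1}^{\alpha_k}$ with an exponent that deteriorates at each step; this is exactly the structure of the paper's Proposition \ref{p:h2-h6} and of \cite{DN}. One also has to justify the integration by parts for unbounded $f_i$ (truncate at level $-j$ and pass to the limit). These are repairable, but as written the quantitative claim ``pairs with $\|g\|_{L^1}$ through the $C^2$-norm of $u_\epsilon$'' is false for $n\geq 2$.

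The genuine gap is (h1) $\Rightarrow$ (h2). Restricting the global solution to a coordinate patch and adding a local potential of $\omega$ gives a H\"older psh function $v$ with $(\ddc v)^n\geq \1_K\!\cdot\!\mu$, i.e.\ only a H\"older \emph{subsolution}; the cut-off by $\1_K$ destroys the Monge--Amp\`ere structure, and there is no soft comparison argument producing a psh $w\in C^\alpha(\overline\Omega)$ with $(\ddc w)^n=\1_K\!\cdot\!\mu$ exactly and $w=0$ on $b\Omega$. That statement is precisely the H\"older-continuous subsolution problem, resolved in \cite{N18,N20}, and it is a substantial theorem in its own right (it is why the paper cites those two references alongside \cite{DN,KN}). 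Likewise, the converse (h2) $\Rightarrow$ (h1) requires gluing the local capacity and H\"older estimates from a finite cover back into a global $\PSH(X,\omega,[-1,0])$-H\"older bound before re-entering the existence scheme, which is the content of \cite{KN} rather than a routine patching. Your outline treats both directions of (h2) as bookkeeping, and that is where the proposal falls materially short of a proof.
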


We are going to prove the following result.

\begin{theorem} \label{t:W-Holder-MA}
Under the hypotheses of Theorem \ref{t:DN-KN}, the above properties (h1)-(h3) are all equivalent to the following ones.
\begin{itemize}
\item[(h4)] $\mu$ is $\PSH(X,\omega,[-1,0])$-H\"older continuous;
\item[(h5)] $\mu$ is $W^*(X)$-H\"older continuous;
\item[(h6)] For every strictly pseudoconvex coordinate patch $\Omega\subset X$ and any compact set $K \Subset \Omega$, the measure 
$\1_K \!\cdot\!\mu$ is $W^*(\Omega)$-H\"older continuous.
\end{itemize}
\end{theorem}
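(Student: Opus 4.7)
My plan is to extend the cycle (h1)$\Leftrightarrow$(h2)$\Leftrightarrow$(h3) from Theorem~\ref{t:DN-KN} to include (h4), (h5), (h6). The main chain of new implications will be (h1)$\Rightarrow$(h5)$\Rightarrow$(h4)$\Rightarrow$(h1), with (h6) joining the cycle via a localization argument together with the local Theorem~\ref{t:main-local}.

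For (h1)$\Rightarrow$(h5) I would follow the interpolation strategy announced in the introduction. Given a H\"older continuous $u$ with $\mu=(\ddc u+\omega)^n$, pick a one-parameter regularization $u_\epsilon$ (corrected by a small additive term if needed to remain $\omega$-psh) satisfying $\|u_\epsilon\|_{\Cc^2}\le C\epsilon^{-A}$ and $\|u-u_\epsilon\|_{L^\infty}\le C\epsilon^B$ with $A,B>0$ determined by the H\"older exponent. Set $\mu_\epsilon:=(\ddc u_\epsilon+\omega)^n$; its density is bounded by $C\epsilon^{-nA}$, so $|\mu_\epsilon(f)|\le C\epsilon^{-nA}\|f\|_{L^1(X)}$. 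For the error $\mu-\mu_\epsilon$ telescope
\[
(\ddc u+\omega)^n-(\ddc u_\epsilon+\omega)^n=\ddc(u-u_\epsilon)\wedge S_\epsilon,
\]
with $S_\epsilon:=\sum_{k=0}^{n-1}(\ddc u+\omega)^k\wedge(\ddc u_\epsilon+\omega)^{n-1-k}$ closed positive with bounded cohomology class. One integration by parts gives $\int f(\mu-\mu_\epsilon)=-\int df\wedge\dc(u-u_\epsilon)\wedge S_\epsilon$, and the Cauchy--Schwarz inequality for positive currents yields
\[
|\mu(f)-\mu_\epsilon(f)|^2\le\Big(\int df\wedge\dc f\wedge S_\epsilon\Big)\Big(\int d(u-u_\epsilon)\wedge\dc(u-u_\epsilon)\wedge S_\epsilon\Big).
\]
The first factor is bounded using $df\wedge\dc f\le T$ with $\|T\|\le\|f\|_*^2\le 1$ together with the cohomology class of $S_\epsilon$, the second by $C\|u-u_\epsilon\|_{L^\infty}\le C\epsilon^B$ after one more integration by parts. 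Hence $|\mu(f)-\mu_\epsilon(f)|\le C\epsilon^{B/2}$, and balancing $\epsilon^{-nA}\|f\|_{L^1}\asymp\epsilon^{B/2}$ produces (h5) with H\"older exponent $B/(B+2nA)$.

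For (h5)$\Rightarrow$(h4), I would first check that $\PSH(X,\omega,[-1,0])$ sits inside a bounded ball of $W^*(X)$. For $v\in\PSH(X,\omega,[-1,0])$, the identity $\ddc(v^2)=2v\ddc v+2\,dv\wedge\dc v$ together with $-v\in[0,1]$ and $\ddc v+\omega\ge 0$ dominates $dv\wedge\dc v$ by a positive current of controlled mass; a standard regularization argument (cf.~\cite{Vig}) upgrades this to a closed positive $T$ with $dv\wedge\dc v\le T$ and $\|T\|\le C$. Hence $\|f_1-f_2\|_*$ is uniformly bounded for $f_1,f_2\in\PSH(X,\omega,[-1,0])$, and applying (h5)---extended from smooth test functions to all of $W^*(X)$ via Remark~\ref{r:final}---to a suitable rescaling of $f_1-f_2$ produces (h4), after a routine approximation of $\omega$-psh functions by smooth ones in the $W^*$ and $L^1$ norms.

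The real obstacle is the converse (h4)$\Rightarrow$(h1). Following the strategy signalled in the introduction I would first translate (h4) into quantitative continuity of $\mu$ against a natural capacity---the content of Proposition~\ref{p:h-alpha}---then either invoke the criterion of \cite{KN} to produce a H\"older continuous solution directly, or produce a continuous solution $u$ first via Theorem~\ref{t:MA-C0} and upgrade its regularity by testing $\mu$ against differences $\rho_\delta u-u$ of Demailly regularizations, whose $W^*$-norms are controlled, translating the H\"older bound on $\mu$ into the decay $\|u-\rho_\delta u\|_{L^\infty}\le C\delta^{\alpha'}$ that characterizes H\"older regularity. Finally, (h5)$\Leftrightarrow$(h6) is achieved by localization: restriction of a global $W^*$ test function to a coordinate patch $\Omega$, and conversely extending a local one to $X$ via a smooth cutoff, both change the $W^*$-norm only by bounded factors; combining (h6) with Theorem~\ref{t:main-local} applied to each $\ind_K\mu$ then produces a local H\"older continuous solution---namely (h2)---which closes the cycle via Theorem~\ref{t:DN-KN}.
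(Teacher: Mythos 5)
Your handling of (h5) $\Rightarrow$ (h4) $\Rightarrow$ (h1) coincides with the paper's: the containment of $\PSH(X,\omega,[-1,0])$ in a bounded ball of $W^*(X)$ via $\ddc(v^2)=2v\,\ddc v+2\,dv\wedge \dc v$ is Lemma \ref{l:w-w-norm-x}, the passage from smooth test functions to arbitrary elements of $\PSH(X,\omega,[-1,0])$ is done by Demailly regularization (monotone convergence suffices; no $W^*$-norm approximation is needed or used), and (h4) $\Rightarrow$ (h1) is exactly Proposition \ref{p:h-alpha} combined with the criterion of \cite{KN}, with the alternative route through Theorem \ref{t:MA-C0} and $\rho_\delta u-u$ also present in the paper. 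Your global interpolation proof of (h1) $\Rightarrow$ (h5) is a legitimate variant of the paper's local Proposition \ref{p:h2-h6}: on compact $X$ you avoid the cutoff and the boundary normalization $u_k=\rho$, but you must confront the fact that Demailly's regularization only gives $\ddc\rho_\delta u\geq -A\omega$ with $A>1$; an additive correction does not restore $\omega$-plurisubharmonicity, so $S_\epsilon$ as written need not be positive, and one has to work with $\ddc u_\epsilon+A\omega$ and absorb the extra terms into closed positive currents of bounded cohomology class. This is repairable and routine.

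The genuine gap is in how (h6) enters the cycle. First, invoking Theorem \ref{t:main-local} is circular: in the paper that theorem is \emph{deduced} from Theorems \ref{t:DN-KN} and \ref{t:W-Holder-MA} by taking $X=\P^n$, so it is not available inside the present proof. Second, and more substantively, the claim that a local test function extends to a global one "changing the $W^*$-norm only by a bounded factor" --- which is what your (h5) $\Rightarrow$ (h6) rests on --- does not go through as stated. For smooth $f$ on $\Omega$ with $\|f\|_{*,\Omega}\leq 1$ and a cutoff $\chi$, one has $d(\chi f)\wedge \dc(\chi f)\leq 2\chi^2\,df\wedge \dc f+2f^2\,d\chi\wedge \dc\chi$, and to bound $\|\chi f\|_{*}$ on $X$ you must dominate this by a closed positive current \emph{on $X$}: the dominating current $T$ for $f$ lives only on $\Omega$, and a closed positive current on a domain does not in general extend to a closed positive one on $X$ (extension by zero across $b\Omega$ is not closed, and Skoda--El Mir does not apply since $b\Omega$ is not pluripolar); moreover $f^2\,d\chi\wedge \dc\chi$ is not dominated by a closed current of controlled mass unless $f$ is uniformly bounded, which is not guaranteed on the unit ball of $W^*(\Omega)$. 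The paper avoids all of this by proving (h2) $\Rightarrow$ (h6) directly (Proposition \ref{p:h2-h6}), i.e., by running the interpolation argument entirely inside $\Omega$ with the local potentials normalized to equal $\rho$ near $b\Omega$, and then noting that (h6) $\Rightarrow$ (h5) is the trivial direction (restriction of test functions, plus Lemma \ref{l:W-positive}). Your proof is repaired by replacing the cutoff step with this local version of your own (h1) $\Rightarrow$ (h5) argument, applied once (h2) is known from Theorem \ref{t:DN-KN}.
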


It is clear that (h6) $\Longrightarrow$ (h5). We will show in this section that (h2) $\Longrightarrow$ (h6) and in Section \ref{s:proof} we will complete the proof by showing that (h4) $\Longleftrightarrow$ (h1) and (h5) $\Longrightarrow$ (h4). It is clear that Theorem \ref{t:main} is a direct consequence of Theorems \ref{t:DN-KN} and \ref{t:W-Holder-MA}.
For Theorem \ref{t:main-local}, we take $X:=\P^n$ and identify $\Omega$ to some open set of an affine chart $\C^n\subset\P^n$. We see that Theorem \ref{t:main-local} is also a consequence of Theorems \ref{t:DN-KN} and \ref{t:W-Holder-MA}. 

As mentioned above, we prove now the implication (h2) $\Longrightarrow$ (h6). It is the direct consequence of the following proposition.

\begin{proposition} \label{p:h2-h6} 
Let $\Omega$ be a smooth bounded strictly pseudoconvex domain in $\C^n$ and $K\Subset \Omega$ a compact set. Let $u_1,\ldots,u_n$ be H\"older continuous psh functions on $\Omega$. Then 
the positive measure $\1_K \!\cdot\! dd^c u_1\wedge\ldots\wedge dd^c u_n$ is $W^*(\Omega)$-H\"older continuous.
\end{proposition}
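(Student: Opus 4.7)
The plan is to combine a regularization-based interpolation scheme, as hinted in the introduction, with integration by parts and the Cauchy-Schwarz inequality for positive currents in the style of Dinh-Sibony. Fix open sets $K\Subset\Omega'\Subset\Omega''\Subset\Omega$ and a cutoff $\chi\in C^\infty_c(\Omega'')$ with $0\le\chi\le 1$ and $\chi\equiv 1$ on $\overline{\Omega'}$. Replacing all the H\"older exponents by their minimum, we may assume that $u_1,\ldots,u_n$ are H\"older continuous of the same exponent $\alpha\in(0,1)$ on $\overline{\Omega''}$. For $\epsilon>0$ smaller than $\dist(\Omega'',\partial\Omega)$, let $u_{j,\epsilon}:=u_j\ast\rho_\epsilon$ on $\Omega''$ be the standard mollification; then $u_{j,\epsilon}$ is smooth and psh on $\Omega''$, with $\|u_j-u_{j,\epsilon}\|_{L^\infty(\Omega'')}\le C\epsilon^\alpha$ and $\|u_{j,\epsilon}\|_{C^2(\Omega'')}\le C\epsilon^{\alpha-2}$.

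To absorb the rough factor $\1_K$, note that $\1_K\le\chi$ and $\mu$ is positive, hence $|\mu(f)|\le\mu(|f|)\le\nu(|f|)$ for every $f\in W^*(\Omega)$, where $\nu:=\chi\cdot dd^c u_1\wedge\cdots\wedge dd^c u_n$. Combined with $\|\,|f|\,\|_{*,\Omega}\le\|f\|_{*,\Omega}$ and the extension of such measures to all of $W^*(\Omega)$ alluded to in Remark~\ref{r:final}, it suffices to prove $|\nu(f)|\le C\|f\|_{L^1(\Omega)}^{\alpha'}$ for every smooth $f$ on $\Omega$ with $\|f\|_{*,\Omega}\le 1$, where $\alpha'>0$ depends only on $\alpha$ and $n$.

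For such $f$, I would expand the Monge-Amp\`ere product telescopically as $dd^c u_{1,\epsilon}\wedge\cdots\wedge dd^c u_{n,\epsilon}+\sum_{j=1}^n R_j^\epsilon$, where each $R_j^\epsilon:=dd^c u_{1,\epsilon}\wedge\cdots\wedge dd^c u_{j-1,\epsilon}\wedge dd^c w_j\wedge dd^c u_{j+1}\wedge\cdots\wedge dd^c u_n$ carries exactly one ``difference'' factor $dd^c w_j$ with $w_j:=u_j-u_{j,\epsilon}$, $|w_j|\le C\epsilon^\alpha$. The fully regularized contribution $|\int f\chi\cdot dd^c u_{1,\epsilon}\wedge\cdots\wedge dd^c u_{n,\epsilon}|$ is at most $C\epsilon^{n(\alpha-2)}\|f\|_{L^1(\Omega)}$ since the integrand has $L^\infty$ coefficient of size $\epsilon^{n(\alpha-2)}$. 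For each $R_j^\epsilon$, introduce the closed positive current $S_j:=dd^c u_{1,\epsilon}\wedge\cdots\wedge dd^c u_{j-1,\epsilon}\wedge dd^c u_{j+1}\wedge\cdots\wedge dd^c u_n$, whose mass on $\supp\chi$ is uniformly bounded in $\epsilon$ by Chern-Levine-Nirenberg (CLN), and integrate by parts once (justified by $\chi\in C_c^\infty(\Omega)$ and $dS_j=0$) to rewrite $\int f\chi\,dd^c w_j\wedge S_j=-\int\chi\,df\wedge d^c w_j\wedge S_j-\int f\,d\chi\wedge d^c w_j\wedge S_j$.

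Both integrals are then bounded by the Cauchy-Schwarz inequality for positive currents, in the form $|\int\chi df\wedge d^c g\wedge S_j|^2\le(\int\chi^2 df\wedge d^c f\wedge S_j)(\int dg\wedge d^c g\wedge S_j)$ and its obvious variant. The $f$-sides are controlled via $df\wedge d^c f\le T$ with $\|T\|_\Omega\le 1$ and a further application of CLN bounding $\int T\wedge S_j$, together with a Poincar\'e-type bound $\|f\|_{L^2(\supp\chi)}\le C\|f\|_{*,\Omega}$ for the second integral. The pivotal $w_j$-side estimate is $\int\chi^2\,dw_j\wedge d^c w_j\wedge S_j=O(\epsilon^\alpha)$, which follows from the algebraic identity $dw_j\wedge d^c w_j=\tfrac12 dd^c(w_j^2)-w_j\,dd^c w_j$: the first piece integrates by parts twice to $\tfrac12\int w_j^2\,dd^c(\chi^2)\wedge S_j=O(\epsilon^{2\alpha})$, while the second is bounded directly by $|w_j|\int\chi^2(dd^c u_j+dd^c u_{j,\epsilon})\wedge S_j\le C\epsilon^\alpha$ (CLN once more). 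Altogether each $R_j^\epsilon$ contributes $O(\epsilon^{\alpha/2})$ to $\nu(f)$. Balancing the two scales by setting $\epsilon=\|f\|_{L^1(\Omega)}^{1/(2n-n\alpha+\alpha/2)}$ (the estimate being trivial when this value is not small) gives $|\nu(f)|\le C\|f\|_{L^1(\Omega)}^{\alpha'}$ with $\alpha':=\alpha/(4n-2n\alpha+\alpha)>0$, finishing the argument. The main technical hurdle is precisely the $O(\epsilon^\alpha)$ control of $\int\chi^2 dw_j\wedge d^c w_j\wedge S_j$: since $dw_j$ is not small (it oscillates at scale $\epsilon$), no pointwise bound is available, and one must exploit the cancellation in $dw_j\wedge d^c w_j=\tfrac12 dd^c(w_j^2)-w_j\,dd^c w_j$ to convert the $L^\infty$-control of $w_j$ into an $L^2$-type control of its gradient, relying on the uniform CLN mass bounds for products of $dd^c$ of bounded psh functions.
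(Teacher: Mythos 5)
Your overall architecture is the same as the paper's: regularize the potentials at a scale $\epsilon$ tied to $\|f\|_{L^1}$, bound the fully smooth part by $\|f\|_{L^1}$ times a negative power of $\epsilon$, treat the difference terms by one integration by parts plus Cauchy--Schwarz against a dominating current $T$ with $df\wedge d^c f\le T$, and reduce everything to the gradient-energy estimate $\int \chi^2\, dw_j\wedge d^c w_j\wedge S_j=O(\epsilon^{\alpha})$, which you correctly identify as the crux and correctly prove via $dw_j\wedge d^c w_j=\tfrac12 dd^c(w_j^2)-w_j\, dd^c w_j$ and Chern--Levine--Nirenberg. Two genuine (and legitimate) deviations: the paper proceeds by induction on the number of factors, regularizing only one potential per step and feeding the induction hypothesis into the smooth part, whereas you regularize all $n$ factors at once and accept the cruder bound $\epsilon^{n(\alpha-2)}\|f\|_{L^1}$ (this costs only the exponent); and your double integration by parts sending $dd^c(w_j^2)$ onto $dd^c(\chi^2)$ is cleaner than the paper's single Stokes application, since it avoids any need for $w_j$ to be $C^1$-small on $\supp d\chi$ in that particular term.

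There is, however, one step that does not work as written: the cutoff term $\int f\, d\chi\wedge d^c w_j\wedge S_j$. After Cauchy--Schwarz its $f$-factor is $\int f^2\, d\chi\wedge d^c\chi\wedge S_j$, an integral of $f^2$ against the positive measure $d\chi\wedge d^c\chi\wedge S_j$, and a Poincar\'e-type bound $\|f\|_{L^2(\supp\chi)}\le C\|f\|_{*,\Omega}$ controls only the integral of $f^2$ against \emph{Lebesgue} measure. The measure $\omega\wedge S_j$ is a Monge--Amp\`ere-type measure of merely H\"older continuous potentials; it need not be absolutely continuous with bounded density, and $f\in W^*(\Omega)$ need not be bounded, so $\int f^2\,\omega\wedge S_j$ is not dominated by $\|f\|_{L^2(\Leb)}^2$. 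This is exactly the point the paper engineers around before starting the estimates: it replaces each $u_k$ by $\max\{u_k-A_1,A_2\rho\}$ so that $u_k$ coincides with the smooth defining function $\rho$ on a neighbourhood of $b\Omega$ containing $\supp d\chi$; there all the currents have smooth bounded coefficients and the $d\chi$-terms are estimated pointwise (this modification also justifies the global integrations by parts and the mass bound $\int_\Omega S\wedge T=\int_\Omega(\ddc\rho)^{n-1}\wedge T$). Your argument can be repaired either by adopting that modification, or by rewriting $f\,d\chi\wedge d^c w_j=d\chi\wedge d^c(fw_j)-w_j\,d\chi\wedge d^c f$ and invoking the $W^*$-\emph{boundedness} of Monge--Amp\`ere-type measures (Lemma \ref{l:W-MA-C0}, based on \cite{DMV}) to control $\int |f|\,\omega\wedge S_j$; but some such input is needed, and the Poincar\'e inequality alone does not supply it. A smaller point: your reduction to nonnegative test functions via $|f|$ leaves Definition \ref{d:W-Holder-local}, which quantifies over smooth $f$; the smoothing of $\max(\pm t,0)$ as in Lemma \ref{l:W-positive} closes that loop.
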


We first state the following useful lemma that can be easily extended to the global setting. We will omit the proof as it follows from a classical smoothing argument (by setting $f^\pm:=\chi^\pm(f)$ where $\chi^\pm$ are suitable approximations of $t\mapsto \max(\pm t,0)$).

\begin{lemma} \label{l:W-positive}
Let $\Omega$ be a bounded domain in $\C^n$. Let $f$ be a smooth function on $\Omega$. Then for every $\varepsilon>0$ there are smooth non-negative functions $f^+$ and $f^-$ such that
$$f=f^+-f^-, \qquad \|f^\pm\|_{L^1(\Omega)}\leq \|f\|_{L^1(\Omega)}+\epsilon \qquad \text{and} \qquad \|f^\pm\|_{*,\Omega}\leq \|f\|_{*,\Omega}+\epsilon.$$
In particular, if $\mu$ and $\mu'$ are positive measures with compact supports in $\Omega$ such that $\mu\leq \mu'$ and $\mu'$ is $W^*(\Omega)$-H\"older continuous, then $\mu$ is also $W^*(\Omega)$-H\"older continuous.
\end{lemma}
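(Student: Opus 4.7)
The plan is to construct $f^\pm$ by postcomposing $f$ with smooth convex approximations of $t\mapsto t^\pm$, and then to deduce the second statement from positivity together with this decomposition. Concretely, for each $\epsilon>0$ I would fix smooth functions $\chi^+_\epsilon,\chi^-_\epsilon:\R\to[0,\infty)$ satisfying $\chi^+_\epsilon-\chi^-_\epsilon=\mathrm{id}_\R$, $\chi^\pm_\epsilon(t)\leq t^\pm+\epsilon$, and $0\leq (\chi^\pm_\epsilon)'\leq 1$. A convenient explicit choice is $\chi^\pm_\epsilon(t):=\tfrac12\bigl(\sqrt{t^2+\epsilon^2}\pm t\bigr)$, for which all four listed properties follow from a direct computation. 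Setting $f^\pm:=\chi^\pm_\epsilon(f)$, the non-negativity, smoothness, and identity $f=f^+-f^-$ are immediate, and the pointwise bound $|f^\pm|\leq |f|+\epsilon$ gives the $L^1$ estimate after absorbing an $\epsilon\,\mathrm{vol}(\Omega)$ factor into a fresh $\epsilon$.

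For the $W^*$-norm estimate I would pick a closed positive $(1,1)$-current $T$ on $\Omega$ that realizes $\|f\|_{*,\Omega}$ up to a small additive error, so $df\wedge d^c f\leq T$ and $\|T\|^{1/2}_\Omega\leq \|f\|_{*,\Omega}-\|f\|_{L^1(\Omega)}+\epsilon/2$. The chain rule gives $df^\pm\wedge d^c f^\pm=\bigl((\chi^\pm_\epsilon)'(f)\bigr)^2\,df\wedge d^c f$, and the identity
\begin{equation*}
T-df^\pm\wedge d^c f^\pm=\bigl(T-df\wedge d^c f\bigr)+\bigl(1-((\chi^\pm_\epsilon)'(f))^2\bigr)\,df\wedge d^c f
\end{equation*}
exhibits the right-hand side as a sum of positive currents, since $0\leq (\chi^\pm_\epsilon)'\leq 1$. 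Hence the same $T$ dominates $df^\pm\wedge d^c f^\pm$, and combining with the $L^1$ bound yields $\|f^\pm\|_{*,\Omega}\leq \|f^\pm\|_{L^1(\Omega)}+\|T\|^{1/2}_\Omega\leq \|f\|_{*,\Omega}+\epsilon$.

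For the second statement, assume $0\leq\mu\leq\mu'$ have compact support in $\Omega$ and that $\mu'$ is $W^*(\Omega)$-H\"older continuous with constants $(c,\alpha)$. Given smooth $f$ with $\|f\|_{*,\Omega}\leq 1$, decompose $f=f^+-f^-$ as above. Since $f^\pm\geq 0$ and $\mu\leq \mu'$, positivity yields
\begin{equation*}
|\mu(f)|\leq \mu(f^+)+\mu(f^-)\leq \mu'(f^+)+\mu'(f^-).
\end{equation*}
Rescaling $g:=f^\pm/(1+\epsilon)$, which has $\|g\|_{*,\Omega}\leq 1$, and applying the H\"older hypothesis to $g$ gives
\begin{equation*}
|\mu'(f^\pm)|\leq c\,(1+\epsilon)^{1-\alpha}\|f^\pm\|_{L^1(\Omega)}^\alpha\leq c\,(1+\epsilon)^{1-\alpha}\bigl(\|f\|_{L^1(\Omega)}+\epsilon\bigr)^\alpha.
\end{equation*}
Since the left-hand side $|\mu(f)|$ does not depend on $\epsilon$, letting $\epsilon\to 0$ produces $|\mu(f)|\leq 2c\,\|f\|_{L^1(\Omega)}^\alpha$, which is the desired H\"older continuity of $\mu$.

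The only mildly delicate step in the plan is the monotonicity identity for $T$ in the second paragraph, which hinges on the derivative bound $0\leq(\chi^\pm_\epsilon)'\leq 1$; once that is in hand, the remainder is a direct application of the chain rule and the elementary positivity argument above.
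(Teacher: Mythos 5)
Your proposal is correct and follows essentially the same route the paper indicates (composing $f$ with smooth approximations $\chi^\pm$ of $t\mapsto\max(\pm t,0)$ with derivative of modulus at most $1$, then using the chain rule to keep the same dominating current $T$); your explicit choice $\chi^\pm_\epsilon(t)=\tfrac12(\sqrt{t^2+\epsilon^2}\pm t)$ works. One small correction: $\chi^-_\epsilon$ is decreasing, so the stated bound $0\leq(\chi^-_\epsilon)'\leq 1$ should read $-1\leq(\chi^-_\epsilon)'\leq 0$; what your argument actually uses is $\bigl((\chi^\pm_\epsilon)'\bigr)^2\leq 1$, which does hold.
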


\proof[Proof of Proposition \ref{p:h2-h6}] 
By hypothesis, there is a smooth strictly psh function $\rho$ on a neighbourhood of $\overline\Omega$ such that $\Omega=\{\rho<0\}$. 
By reducing slightly the domain $\Omega$, we can assume that $u_k$ is defined on a neighbourhood of $\overline \Omega$. Replacing 
$u_k$ by $\max\{u_k-A_1, A_2\rho\}$ and $\rho$ by $A_2\rho$ for some positive constants $A_2\gg A_1 > \sup_\Omega u_k$, we can assume that $u_k= \rho$ on a neighbourhood of $b\Omega$. This doesn't change the values of $u$ on a neighbourhood of $K$.

Let $f$ be a non-negative smooth function on $\Omega$ such that $\|f\|_{*,\Omega}\leq 1$. By Lemma \ref{l:W-positive}, we only need to show that 
\begin{equation*} 
\int_K f \ddc u_1\wedge\ldots\wedge \ddc u_n \leq c \|f\|_{L^1(\Omega)}^\alpha
\end{equation*}
for some positive constants $c$ and $\alpha$ independent of $f$.
For this purpose, we will prove by induction on $k=0,...,n$ that given any compact set $K\subset\Omega$ we have 
\begin{equation} \label{e:K-int-MA-bis}
\int_K f \ddc u_1\wedge\ldots\wedge \ddc u_k\wedge \omega^{n-k} \leq c \|f\|_{L^1(\Omega)}^\alpha
\end{equation}
for some positive constants $c$ and $\alpha$ independent of $f$. Recall that $\omega$ is the standard K\"ahler form on $\C^n$.

Obviously, the estimate holds for $k=0$. Assume that the desired estimate is true for $k-1$ instead of $k$. It remains to show the same property for $k$, i.e. to prove \eqref{e:K-int-MA-bis}.
For simplicity and in order to avoid confusion, let us write 
$$S:= dd^c u_1\wedge\ldots\wedge \ddc u_{k-1} \wedge \omega^{n-k} \quad \text{and} \quad u:=u_k$$ 
and read the induction hypothesis as: 
given any compact set $K'\subset\Omega$ we have 
\begin{equation} \label{e:K-int-MA-3}
\int_{K'} f S\wedge \omega \leq c_0 \delta^{4\kappa} \qquad \text{with} \qquad \delta:=\|f\|_{L^1(\Omega)}\leq 1
\end{equation}
for some positive constants $c_0$ and $\kappa$ independent of $f$. 

Fix a compact set $L$ and a domain $\Omega^+$ such that $K\subset L\Subset \Omega\Subset \Omega^+$ and $u=\rho$ on  $\Omega^+\setminus L$. 
Fix also a cutoff smooth function $0\leq\chi\leq 1$ with compact support in $\Omega$ such that $\chi=1$ on a domain $\Omega^-$ with $L\Subset \Omega^-\Subset \Omega$. Denote by $\lambda$ the distance between $b\Omega$ and $b(\Omega^+\setminus\Omega^-)$ and define $\varepsilon:=\lambda\delta^\kappa\leq \lambda$. 
Consider $u_\varepsilon$ the standard $\varepsilon$-regularization of $u$ by the convolution in $\Omega^+$.
Then, we have for some positive constant $c_1$
\begin{equation} \label{e:u-epsilon}
\|u\|_{C^2(\Omega \setminus \Omega^-)} \leq c_1, \qquad  \|u-u_\varepsilon\|_{C^2(\Omega \setminus \Omega^-)} \leq c_1\varepsilon \qquad \text{and} \qquad \|u_\varepsilon\|_{C^2(\Omega)} \leq  c_1\varepsilon^{-2}
\end{equation}
which imply for some positive constant $c_2$
$$-c_2\varepsilon^{-2} \omega \leq dd^c u_\varepsilon \leq c_2\varepsilon^{-2} \omega.$$
Thus, using \eqref{e:K-int-MA-3} for a compact set $K'$ containing the support of $\chi$, we get
$$\int_\Omega \chi f S \wedge dd^c u_\varepsilon  \lesssim \varepsilon^{-2} \int_\Omega \chi f S \wedge \omega 
\leq 	\varepsilon^{-2} \delta^{4\kappa}  \lesssim \delta^{2\kappa}. $$
Now, since
$$\int_K f S \wedge dd^c u \leq \int_\Omega \chi f S \wedge dd^c u = \int_\Omega \chi f S \wedge dd^c u_\varepsilon  + 
\int_\Omega \chi f S \wedge dd^c (u-u_\varepsilon),$$
it is enough to bound the last integral by a constant times a power of $\delta$. 

By  Stokes' theorem, the last integral can be written as
$$\int_\Omega \chi f S \wedge dd^c (u- u_\varepsilon)  = 	-\int_\Omega f d\chi\wedge S \wedge d^c (u-u_\varepsilon) 
-\int_\Omega \chi df\wedge S   \wedge d^c (u-u_\varepsilon).$$
Since $d\chi$ vanishes outside $\Omega\setminus\Omega^-$, the second inequality in \eqref{e:u-epsilon} implies that the first term in the RHS of the last identity is bounded by a constant times $\varepsilon=\delta^\kappa=\|f\|_{L^1(\Omega)}^\kappa$. It remains to bound the last integral of the last identity.
By the Cauchy-Schwarz inequality, this integral satisfies
\begin{equation} \label{e:df-dcu}
\Big|\int_\Omega \chi df \wedge S \wedge d^c (u-u_\varepsilon) \Big|^2 \leq \Big(\int_\Omega S \wedge df \wedge d^c f\Big)\Big( \int_\Omega \chi^2 S \wedge d(u-u_\varepsilon) \wedge d^c (u- u_\varepsilon)\Big).
\end{equation}

We show that the first factor in the last product is bounded by a constant. 
Since $\|f\|_{*,\Omega} \leq 1$, there is a closed positive $(1,1)$-current $T$ on $\Omega$ that $df \wedge d^c f \leq T$ and  $\|T\| \leq 1$. Therefore, using the definition of $S$ and the fact that $u=\rho$ near $b\Omega$, we obtain by integration by parts
$$\int_\Omega S \wedge df \wedge d^c f \leq  \int_\Omega S \wedge T = \int_\Omega (dd^c\rho)^{n-1} \wedge T$$
which is bounded by a constant because $\ddc\rho$ is bounded by a constant times $\omega$.
In order to get the result, it is enough to show that the last integral in \eqref{e:df-dcu} is bounded by a constant times a power of $\delta$.

For this purpose, by  Stokes' theorem, we obtain
\begin{eqnarray*}
\lefteqn{\int_\Omega \chi^2 S \wedge d(u-u_\varepsilon) \wedge d^c (u-u_\varepsilon) =}\\
& =  &-\int_{\Omega} 2\chi  (u-u_\varepsilon) d\chi\wedge S \wedge d^c (u-u_\varepsilon) -  	
\int_\Omega \chi^2 (u-u_\varepsilon) S \wedge  dd^c (u-u_\varepsilon).
\end{eqnarray*}
Since $d\chi=0$ outside $\Omega\setminus\Omega^-$, the second identity in \eqref{e:u-epsilon} implies that the first term in the RHS of the last line is bounded by a constant times $\varepsilon^2=\delta^{2\kappa}=\|f\|_{L^1(\Omega)}^{2\kappa}$. Moreover, the second term satisfies
\begin{eqnarray*}
\Big|\int_\Omega \chi^2 (u-u_\varepsilon) S \wedge  dd^c (u-u_\varepsilon)\Big| &\leq & 
\|u-u_\varepsilon\|_{L^\infty(\Omega ) } \int_\Omega S \wedge  dd^c (u+u_\varepsilon) \\
&=&  \|u-u_\varepsilon\|_{L^\infty(\Omega ) } \int_\Omega (\ddc\rho)^{n-1} \wedge  dd^c (\rho+\rho_\varepsilon),
\end{eqnarray*}
where the last identity is obtained by using Stokes' theorem and $\rho_\varepsilon$ is the standard $\varepsilon$-regularization of $\rho$ by the convolution.

The last integral is clearly bounded by a constant because $\rho$ is smooth. 
Finally, since $u$ is H\"older continuous,  we have 
$$\|u-u_\varepsilon\|_{L^\infty} \leq c_3 \varepsilon^\gamma = c_3 \delta^{\gamma \kappa}$$ 
for some positive constants $c_3$ and $\gamma$. The result follows. 
\endproof

Using the same techniques, we obtain the following result.

\begin{lemma} \label{l:W-MA-C0}
Let $\Omega$ be a bounded domain in $\C^n$ and $u_1,\ldots, u_n$ bounded psh functions on $\Omega$. Let $K$ be a compact subset of $\Omega$
and $\mu$ a positive measure supported by $K$. Assume that $\mu\leq \ddc u_1\wedge \ldots \wedge \ddc u_n$. Then, $\mu$ seen as a functional on the space $W^*(\Omega)\cap\Cc^0(\Omega)$, extends to a functional $\mu:W^*(\Omega)\to\R$ which is continuous in the following sense: if a bounded sequence $(f_k)\subset W^*(\Omega)$ converges in the sense of currents to a function $f\in W^*(\Omega)$ then $\mu(f_k)$ converges to $\mu(f)$. 
\end{lemma}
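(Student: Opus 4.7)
My plan is to adapt the scheme of Proposition~\ref{p:h2-h6} to a qualitative estimate that is sufficient for the continuity claim: for every smooth $f\ge 0$ with $\|f\|_{*,\Omega}\le 1$,
\begin{equation*}
\int_\Omega f\,d\mu \;\le\; \eta\bigl(\|f\|_{L^1(\Omega)}\bigr),
\end{equation*}
where $\eta:[0,\infty)\to[0,\infty)$ is non-decreasing with $\eta(0^+)=0$. By Lemma~\ref{l:W-positive} and the domination $\mu\le\ddc u_1\wedge\cdots\wedge\ddc u_n$, it suffices to treat the extremal measure $\ddc u_1\wedge\cdots\wedge\ddc u_n$ acting on smooth non-negative test functions. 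As in the preparation of Proposition~\ref{p:h2-h6}, a boundary modification lets me assume that each $u_i$ coincides with a common smooth strictly psh defining function of $\Omega$ on a neighbourhood of $b\Omega$, without altering $\mu$ near $K$.

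I then establish the estimate by induction on $k=0,\ldots,n$ for the partial products $\ddc u_1\wedge\cdots\wedge\ddc u_k\wedge\omega^{n-k}$, obtaining moduli $\eta_k$ with $\eta_k(0^+)=0$. The case $k=0$ is trivial. For the inductive step, setting $S:=\ddc u_1\wedge\cdots\wedge\ddc u_{k-1}\wedge\omega^{n-k}$ and $u:=u_k$, I smooth $u$ by convolution at scale $\varepsilon$ to produce $u_\varepsilon$ and decompose $\int \chi f\, S\wedge\ddc u$ into the sum of $\int \chi f\, S\wedge\ddc u_\varepsilon$ (bounded by $C\varepsilon^{-2}\eta_{k-1}(\|f\|_{L^1(\Omega)})$ using the inductive hypothesis and $\ddc u_\varepsilon\le C\varepsilon^{-2}\omega$) and a correction $\int\chi f\, S\wedge\ddc(u-u_\varepsilon)$ which, after the Stokes + Cauchy--Schwarz manipulations of Proposition~\ref{p:h2-h6}, is controlled by a uniformly bounded factor times $\bigl(\varepsilon^2+\|u-u_\varepsilon\|_{L^\infty}\bigr)^{1/2}$.

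The main technical point is the term $\|u-u_\varepsilon\|_{L^\infty}$: without Hölder control it does not decay as a power of $\varepsilon$, but whenever $u$ is continuous it still tends to $0$ as $\varepsilon\to 0$, which is all I need. Given any $\eta>0$, I first fix $\varepsilon$ small enough to make the Cauchy--Schwarz factor less than $\eta/2$, then shrink $\delta:=\|f\|_{L^1(\Omega)}$ until $C\varepsilon^{-2}\eta_{k-1}(\delta)<\eta/2$; this produces the modulus $\eta_k$. For merely bounded $u_i$ (which are only upper semicontinuous), I approximate each $u_i$ from above by a decreasing sequence of smooth psh functions on a slightly smaller domain containing $K$ and pass to the limit using the Bedford--Taylor continuity of the Monge--Ampère operator along decreasing sequences of bounded psh functions.

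Finally, the estimate at $k=n$ shows that $f\mapsto\mu(f)$ is uniformly continuous on every $\|\cdot\|_{*,\Omega}$-bounded set of smooth functions with respect to the $L^1(\Omega)$-distance. Since smooth functions are dense in $W^*(\Omega)$ in the sense that any $f\in W^*(\Omega)$ admits a mollification by smooth functions with uniformly controlled $\|\cdot\|_{*,\Omega}$-norms and $L^1(\Omega)$-convergence, $\mu$ extends uniquely to a functional $\mu:W^*(\Omega)\to\R$. For the continuity statement, a $\|\cdot\|_{*,\Omega}$-bounded sequence $(f_k)$ converging to $f$ as currents converges to $f$ in $L^1(\Omega)$ by the compact embedding $W^{1,2}(\Omega)\hookrightarrow L^1(\Omega)$ (Rellich--Kondrachov), and the estimate then yields $\mu(f_k)\to\mu(f)$.
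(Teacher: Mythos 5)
Your route is genuinely different from the paper's: the paper disposes of this lemma in a few lines by quoting \cite[Lemmas 2.3 and 2.6]{DMV}, whereas you try to extract a self-contained qualitative modulus of continuity from the integration-by-parts scheme of Proposition \ref{p:h2-h6}. For \emph{continuous} $u_i$ this essentially works, modulo a point you gloss over: here $\Omega$ is only a bounded domain, so the boundary normalization $u_k=\rho$ near $b\Omega$ and the key bound $\int_\Omega S\wedge T=\int_\Omega(\ddc\rho)^{n-1}\wedge T$ have no meaning as written; you must first localize to a smooth strictly pseudoconvex subdomain $B$ with $K\Subset B\Subset\Omega$ (covering $K$ by finitely many balls if necessary) and work with $\mu$ and the $u_i$ restricted there. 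But the lemma is stated for merely \emph{bounded} psh $u_i$, and there your argument has a genuine gap.

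The gap is in the treatment of $\|u-u_\varepsilon\|_{L^\infty}$. For a bounded, non-continuous psh $u$ this quantity does not tend to $0$ as $\varepsilon\to 0$, as you acknowledge. Your proposed fix --- replace each $u_i$ by a decreasing sequence of smooth psh functions $u_i^{(j)}$ and invoke Bedford--Taylor convergence --- does not close it: the modulus $\eta^{(j)}$ your induction produces for $\ddc u_1^{(j)}\wedge\cdots\wedge\ddc u_n^{(j)}$ depends on the moduli of continuity of the $u_i^{(j)}$, which degenerate as $j\to\infty$. Weak convergence of the measures transfers the values $\int f\,d\mu_j\to\int f\,d\mu$ for each fixed $f$, but not the non-uniform bounds $\eta^{(j)}$, so in the limit you obtain no modulus for $\mu$ at all. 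To salvage the scheme you would have to avoid pulling the sup norm out: bound $\big|\int\chi^2(u-u_\varepsilon)S\wedge\ddc(u-u_\varepsilon)\big|$ by the $f$-independent quantity $\int\chi^2(u_\varepsilon-u)\,S\wedge\ddc(u+u_\varepsilon)$ and show directly that it tends to $0$ as $\varepsilon\to 0$ via the monotone convergence theorems for the Monge--Amp\`ere operator; with that replacement your ``first fix $\varepsilon$, then shrink $\delta$'' step goes through. A further small point: Rellich--Kondrachov is not available globally on an arbitrary bounded domain, but since $\supp\mu=K\Subset\Omega$ only $L^1_{\loc}$-convergence near $K$ is needed, so that part is harmless.
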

\proof
Let $\Omega'\Subset\Omega$ be a domain containing $K$. By \cite[Lemma 2.3]{DMV}, for every function $f\in W^*(\Omega)$, the standard regularization $f_\varepsilon$ of $f$ by the convolution, converges to $f$ when $\varepsilon$ goes to 0; moreover, $f_\varepsilon$ is smooth and has a bounded norm in $W^*(\Omega')$. Therefore, in order to obtain the lemma, it is enough to show that if a bounded sequence $(f_k)\subset W^*(\Omega')\cap \Cc^0(\Omega')$ converges in the sense of currents to a function $f\in W^*(\Omega')$, then the sequence of real numbers $\mu(f_k)$ is a Cauchy sequence.
This property is in fact a particular case of \cite[Lemma 2.6]{DMV}.
\endproof

\begin{remark} \rm \label{r:MA-int}
Any function $f\in W^*(\Omega)$ admits canonical representatives which are functions defined outside a pluripolar set and equal to $f$ almost everywhere. Two canonical representatives of $f$ are equal outside a pluripolar set. In the last lemma, the value of $\mu(f)$ is in fact equal to the usual integral of a canonical representative of $f$ with respect to $\mu$. It is well known that $\mu$ has no mass on pluripolar sets. Therefore, the last integral does not depend on the choice of the canonical representative of $f$. We refer to \cite{DMV} for more details.
\end{remark}


\section{Monge-Amp\`ere measures with continuous potentials} \label{s:MA-C0}

From now on, we work in the global setting. Some results can be easily extended to the local setting but for simplicity 
we choose  not to  discuss this case here. Let $X,n$ and $\omega$ be as in the Introduction. 

We first consider some properties of $\PSH(X,\omega)$ and $W^*(X)$.
Define 
$$\PSH_0(X,\omega):=\big\{u\in\PSH(X,\omega) : \max_X u =0\big\}.$$
The following result is due to Tian \cite{Ti}.

\begin{lemma} \label{l:Tian}
There are positive constants $c$ and $\alpha$ such that 
\begin{equation} \label{e:Tian}
\int_X e^{-\alpha u} \omega^n\leq c \quad \text{for every} \quad u\in\PSH_0(X,\omega).
\end{equation}
In particular, Tian's invariant defined as
$$\alpha_0:=\sup\big\{\alpha : \eqref{e:Tian} \text{ holds for some positive constant } c\big\}$$
is a finite positive number.
\end{lemma}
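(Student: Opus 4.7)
The strategy is to derive \eqref{e:Tian} by combining the $L^1$-compactness of $\PSH_0(X,\omega)$ with a uniform version of Skoda's integrability theorem.

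First, I would recall that $\PSH_0(X,\omega)$ is compact in $L^1(X)$: any sequence with uniform upper bound has an $L^1$-convergent subsequence whose limit is again $\omega$-psh and normalized. This yields a uniform bound $\|u\|_{L^1(X)}\leq C_0$. Cover $X$ by finitely many coordinate balls $U_j$ on which $\omega=\ddc\rho_j$ for smooth strictly psh local potentials $\rho_j$. On each $U_j$, the function $v_u:=u+\rho_j$ is genuinely psh, uniformly bounded above, uniformly bounded in $L^1$, and has the same Lelong numbers as $u$. Since $\ddc u+\omega$ is a closed positive current in the class $\{\omega\}$ of total mass $1$, the Chern--Levine--Nirenberg inequality provides a uniform mass bound on each $U_j$, and in turn a uniform Lelong number bound $\nu(u,x)\leq \nu_0 <\infty$ valid for all $u\in\PSH_0(X,\omega)$ and all $x\in X$.

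The core step is a uniform Skoda estimate: for a family of psh functions on a fixed ball with bounded $L^1$-norm and Lelong numbers bounded by $\nu_0$, there exist $\alpha>0$ and $C<\infty$ such that $\int e^{-\alpha v}\leq C$ uniformly across the family. Summing the local estimates over the finite cover and absorbing the smooth bounded contributions of the $\rho_j$ yields \eqref{e:Tian}, so $\alpha_0>0$. Finiteness $\alpha_0<\infty$ then follows by exhibiting a single $u\in\PSH_0(X,\omega)$ with $\nu(u,x_0)>0$ at some point $x_0$ (for instance a suitably normalized globalization of $\varepsilon\log|z-x_0|$ in a chart): for $\alpha$ large the exponential $e^{-\alpha u}$ ceases to be locally integrable by the sharp form of Skoda's estimate.

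The main obstacle is the packaging of the uniform Skoda step across the whole family $\PSH_0(X,\omega)$ rather than for a single function. A clean way to handle it is via a contradiction-compactness argument: assume on the contrary that $\alpha_k \to 0^+$ and $u_k\in\PSH_0(X,\omega)$ satisfy $\int_X e^{-\alpha_k u_k}\omega^n\to\infty$; extract an $L^1$-limit $u_\infty\in\PSH_0(X,\omega)$; apply the ordinary Skoda theorem to $u_\infty$; and then invoke the Demailly--Koll\'ar semicontinuity theorem for complex singularity exponents to conclude that the integrals along the sequence must remain bounded, contradicting the choice of $u_k$.
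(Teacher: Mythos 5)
The paper offers no proof of this lemma at all---it is attributed directly to Tian \cite{Ti}---so there is nothing internal to compare against; your proposal is a genuine proof sketch and it is essentially correct, following the now-standard route (compactness of $\PSH_0(X,\omega)$ in $L^1$, reduction to local psh functions via local potentials, a uniform Skoda-type integrability estimate, and a Lelong-number example for the finiteness of $\alpha_0$). Two remarks. First, the "uniform Skoda estimate" as you state it---$L^1$-boundedness plus a Lelong number bound implies uniform exponential integrability---is not quite an off-the-shelf statement; the clean versions (Zeriahi, or Demailly--Koll\'ar semicontinuity) require compactness of the family in $L^1_{\rm loc}$, not mere boundedness. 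You do have this compactness and you do invoke it correctly in your contradiction packaging at the end, so the argument closes, but the proof really rests on that last paragraph rather than on the "uniform Skoda" step as first announced; alternatively one can avoid Demailly--Koll\'ar entirely and use the elementary sub-mean-value estimate of H\"ormander/Tian (a dimensional $\alpha$ works for any psh $u\le 0$ on a ball with $u$ bounded below at the center, and Chebyshev plus the $L^1$ bound supplies such center points), which is closer to Tian's original argument and gives an explicit $\alpha$. Second, for $\alpha_0<\infty$ your example is fine: a function of the form $\max\{\varepsilon\log|z-x_0|, -C\}$ extended by $0$, with $\varepsilon$ small enough to be $\omega$-psh, has positive Lelong number at $x_0$, and $e^{-\alpha u}$ fails to be integrable once $\alpha\varepsilon\ge 2n$, so no constant $c$ can work for large $\alpha$.
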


Let $E$ be a Borel subset of $X$. 
Recall from \cite{BT82, ko03} that the global version of Bedford-Taylor capacity of $E$ is defined by
$$\capa_\omega(E) := \sup\Big\{\int_E (dd^c v+\omega)^n : v\in \PSH(X,\omega), \; -1\leq v \leq 0\Big\}.$$
Recall that the {\it global extremal functions} associated with $E$ are defined as
\[\label{eq:global-local-ext}h_{E}(x) := \sup \big\{v(x): v \in \PSH(X,\omega), \; v \leq 0 \text{ and } v_{|_E} \leq -1 \big\}\]
and
\[ \label{eq:siciak-fct}V_E(x):= \sup \big\{ v(x): v\in \PSH(X,\omega), \; v\leq 0 \text{ on }E \big\}.\]
The second function is an analogue of  the Siciak-Zahariuta extremal function in $\C^n$. Denote by $h^*_E$ and $V_E^*$ the upper semi-continuous regularizations of $h_E$ and $V_E$ respectively.
The following result was obtained in \cite{GZ05}. We only state it for compact sets for simplicity.

\begin{lemma} \label{l:extremal}
Let $K$ be a compact subset of $X$. Then we have the following properties
\begin{itemize}
\item[(a)] $h_K =-1$ on $K$, $h_K^* \in \PSH(X,\omega,[-1,0])$, $h_K\leq h_K^*$ and  the set $\{h_K \not= h_K^*\}$ is pluripolar;
\item[(b)] $K$ is non-pluripolar if and only if $\sup_X V_K^*<+\infty$; and in this case, we have
$$\capa_\omega(K) = \int_K (dd^c h_K^*+\omega)^n= \int_X (-h_K^*) (dd^c h^*_K+\omega)^n$$
and also $V_K^*\in\PSH(X,\omega)$, $V_K^*\geq 0$ with 
$$\int_K  (dd^c V_K^*+\omega)^n  = \int_X (dd^c V_K^*+\omega)^n=\int_X\omega^n=1;$$
\item[(c)] there is a positive constant $c$ independent of $K$ such that if $\sup_X V_K^* \geq 1$ then 
$$\frac{1}{\capa_\omega(K)^{1/n}} \leq \sup_X V_K^* \leq \frac{c}{\capa_\omega(K)}\cdot$$
\end{itemize}
\end{lemma}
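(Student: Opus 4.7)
My plan is to prove parts (a), (b), (c) in order, relying on the standard Bedford--Taylor and Guedj--Zeriahi machinery for upper envelopes of $\omega$-psh functions.

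For (a), the bounds $-1 \le h_K \le 0$ are immediate since the constant $-1$ is admissible in the defining family while every admissible $v$ satisfies $v \le 0$ on $X$; on $K$ every such $v$ is already $\le -1$, so $h_K = -1$ on $K$. By Choquet's lemma, $h_K$ agrees with the supremum of a countable subfamily, hence $h_K^*$ is $\omega$-psh and lies in $[-1,0]$; the Bedford--Taylor negligibility theorem (quasi-psh version) then gives pluripolarity of $\{h_K < h_K^*\}$.

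For (b), if $K$ is pluripolar, pick $\varphi \in \PSH(X,\omega)$ with $\varphi \equiv -\infty$ on $K$; after subtracting $\max_X \varphi$ and adding an arbitrary constant $C>0$, the shifted function is still admissible for $V_K$, forcing $\sup_X V_K = +\infty$. Conversely, non-pluripolarity of $K$ combined with $L^1$-compactness of $\PSH_0(X,\omega)$ (applied to the normalized members of the family after translation) yields a uniform bound on $\sup_X V_K^*$. The capacity formula follows because $h_K^*$ is a maximal element of the family defining $\capa_\omega(K)$ and the Monge--Amp\`ere measure $(\ddc h_K^* + \omega)^n$ is concentrated on the contact set $\{h_K^* = -1\} \subset K$ modulo a pluripolar set (standard envelope localization); this gives $\capa_\omega(K) = \int_K (\ddc h_K^* + \omega)^n$, and since $-h_K^* = 1$ there and the measure charges no pluripolar set, we also get the stated integral formula. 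For $V_K^*$: admissibility of $0$ forces $V_K^* \ge 0$, while each admissible $v$ satisfies $v \le 0$ on $K$ so $V_K^* = 0$ quasi-everywhere on $K$; the same envelope localization places $(\ddc V_K^* + \omega)^n$ on $K$, and Stokes' theorem gives total mass $\int_X \omega^n = 1$.

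For (c), set $M := \sup_X V_K^* \ge 1$. For the lower bound, consider $v := V_K^*/M - 1$: since $M \ge 1$ we have $\ddc v \ge -\omega/M \ge -\omega$ and $v \in [-1,0]$, so $v$ is admissible in the definition of $\capa_\omega(K)$. Writing
$$\ddc v + \omega = \tfrac{1}{M}(\ddc V_K^* + \omega) + \tfrac{M-1}{M}\,\omega$$
and keeping only the leading term of the binomial expansion of the $n$-th power, one obtains
$$\capa_\omega(K) \ge \int_K (\ddc v + \omega)^n \ge \frac{1}{M^n}\int_K (\ddc V_K^* + \omega)^n = \frac{1}{M^n},$$
using part (b) in the last equality. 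Hence $M \ge \capa_\omega(K)^{-1/n}$. The upper bound $M \le c/\capa_\omega(K)$ is the main Guedj--Zeriahi estimate from GZ05, whose proof couples the Tian/$\alpha$-invariant integrability of Lemma \ref{l:Tian} with a sub-level-set volume comparison.

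The main obstacle is the upper bound in (c): it is the only step that genuinely uses the uniform exponential integrability from Lemma \ref{l:Tian} rather than routine envelope and localization arguments. Everything else reduces to standard pluripotential theory once the contact-set structure of the Monge--Amp\`ere measures of $h_K^*$ and $V_K^*$ is exploited.
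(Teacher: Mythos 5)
Your treatment of (a), (b), and the lower bound in (c) matches the paper: for (a) and (b) the paper simply cites \cite{GZ05}, and your envelope/Choquet/negligibility sketches are the standard arguments behind those citations; for the lower bound in (c) your function $v=V_K^*/M-1$ is exactly the paper's $u=M^{-1}(V_K^*-M)$, and the inequality $(dd^c v+\omega)^n\geq M^{-n}(dd^c V_K^*+\omega)^n$ together with $\int_K(dd^c V_K^*+\omega)^n=1$ is the same computation.

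The genuine gap is the upper bound $M\leq c/\capa_\omega(K)$, which you dismiss as ``the main Guedj--Zeriahi estimate.'' The paper explicitly notes that (c) is \emph{slightly different} from \cite[Prop 7.1]{GZ05} and therefore supplies its own short proof, so simply citing GZ05 does not discharge this step. Moreover, the mechanism you describe --- coupling Tian's exponential integrability (Lemma \ref{l:Tian}) with a sub-level-set volume comparison --- is not how this bound is obtained and would not straightforwardly yield it: applying $\int_X e^{-\alpha(V_K^*-M)}\omega^n\leq c$ on $K$ (where $V_K^*=0$ quasi-everywhere) only gives $\vol(K)\leq c\,e^{-\alpha M}$, i.e.\ a \emph{volume} bound, and volume does not dominate capacity, so you cannot convert this into $\capa_\omega(K)\leq c/M$. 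The actual argument is elementary and does not use Lemma \ref{l:Tian} at all: for any admissible $v$ in the definition of $V_K$ one has $v\leq M$ on $X$, hence $M^{-1}(v-M)$ is admissible for $h_K$, giving $M^{-1}(V_K^*-M)\leq h_K^*\leq 0$; then by part (b),
$$\capa_\omega(K)=\int_X(-h_K^*)\,(dd^c h_K^*+\omega)^n\leq \frac{1}{M}\int_X|V_K^*-M|\,(dd^c h_K^*+\omega)^n,$$
and the last integral is bounded by a constant independent of $K$ via the Chern--Levine--Nirenberg-type inequality \cite[Cor 3.3]{GZ05} (which compares it with $\int_X|V_K^*-M|\,\omega^n+n$) together with the $L^1$-compactness of $\PSH_0(X,\omega)$, since $\sup_X(V_K^*-M)=0$. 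You should supply this (or an equivalent) argument rather than an appeal to an exponential-integrability estimate that does not apply.
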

\proof
For (a) and (b), see \cite{GZ05}. Property (c) is slightly different from \cite[Prop 7.1]{GZ05}. We give here the proof for reader's convenience.
Define 
$$M:=\sup_X V_K^* \quad \text{and} \quad u:=M^{-1}(V_K^*-M).$$ 
Since $M \geq 1$ by hypothesis, we have $u\in \PSH(X,\omega,[-1,0])$. Note that $ dd^c V_K^*+\omega \leq M (dd^c u+\omega)$. Therefore, by (b)
$$M^{-n} = M^{-n} \int_K (dd^c V_K^*+\omega)^n \leq \int_K  (dd^cu+\omega)^n \leq \capa_\omega(K).$$
The first inequality is proved.

We prove now the second inequality. Let $v \in \PSH(X,\omega) $ be such that $v\leq 0$ on $K$ and define $v':=M^{-1}(v-M)$. 
Then, by the definition of $M$, we have 
$v \leq M$ on $X$. Hence $v' \in \PSH(X,\omega) $ and furthermore,  $\sup_X v' \leq 0$ and $v' \leq -1$ on $K$. This implies that $v' \leq h_K^*$. 
It follows from the definitions of $V_K$ and $V_K^*$ that
$$\frac{V_K^*-M}{M} \leq h_K^* \leq 0.$$
Then,
$$\capa_\omega(K) = \int_X (-h_K^*) (dd^c h_K^*+\omega)^n \leq \frac{1}{M} \int_X|V_K^*-M| (dd^c h_K^*+\omega)^n.$$
By \cite[Cor 3.3]{GZ05} the last integral is less than
\[ \label{eq:l1-uniform-x} \int_X |V_K^*- M| \omega^n + n \leq \sup\big\{\|v\|_{L^1} : v\in \PSH_0(X,\omega)\big\}+n =:c
\]
because $\sup_X (V_K^*-M) =0$. Since $\PSH_0(X,\omega)$ is compact in $L^1(X)$, the constant $c$ is finite.  
The second inequality in (c) follows easily.
\endproof

The following lemma shows that the cone $\PSH (X,\omega) \cap L^\infty(X)$ is contained in the space $W^*(X)$.

\begin{lemma} \label{l:w-w-norm-x}
Let $u, v\in \PSH(X,\omega) \cap L^\infty(X)$ be such that $v \leq u\leq 0$. Then we have 
$$\|u-v\|_* \leq \|u-v\|_{L^1} +  \|v\|_{L^\infty}\leq \|v\|_{L^1}+\|v\|_{L^\infty}.$$
In particular, if $u\equiv 0$, then 
$$\|v\|_* \leq \|v\|_{L^1} + \|v\|_{L^\infty}.$$
\end{lemma}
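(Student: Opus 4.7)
The plan is to exhibit an explicit closed positive $(1,1)$-current $T$ dominating $d(u-v)\wedge d^c(u-v)$ and whose mass is controlled by $\|v\|_{L^\infty}$. Set $w:=u-v$ and $M:=\|v\|_{L^\infty}$. Since $v\leq u\leq 0$, one has pointwise $0\leq w\leq -v\leq M$; integrating against $\omega^n$ gives $\|u-v\|_{L^1}\leq \|v\|_{L^1}$, which is the second asserted inequality (and, in the case $u\equiv 0$, reduces the problem to the last claim).

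For the first inequality, I would take
\[
T\;:=\;\ddc(w^2/2)\;+\;M(\ddc v+\omega).
\]
Using the identity $\ddc(w^2/2)=w\,\ddc w+dw\wedge d^c w$ (valid in the sense of currents for $w$ a difference of bounded $\omega$-psh functions, by Bedford--Taylor) together with $\ddc w=(\ddc u+\omega)-(\ddc v+\omega)$, one rewrites
\[
T\;=\;dw\wedge d^c w\;+\;w(\ddc u+\omega)\;+\;(M-w)(\ddc v+\omega).
\]
Since $0\leq w\leq M$ and both $\ddc u+\omega$ and $\ddc v+\omega$ are positive closed currents, every summand on the right is a positive $(1,1)$-current. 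Hence $T\geq 0$ and $T\geq dw\wedge d^c w$. Closedness of $T$ is automatic: it is the sum of the exact current $\ddc(w^2/2)$ and the closed current $M(\ddc v+\omega)$.

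The mass is then computed by Stokes on the closed K\"ahler manifold $X$, which annihilates the exact term (since $\int_X \ddc\phi\wedge\omega^{n-1}=0$ for any $\phi\in L^1(X)$). Thus
\[
\|T\|\;=\;\int_X T\wedge\omega^{n-1}\;=\;M\int_X(\ddc v+\omega)\wedge\omega^{n-1}\;=\;M\int_X\omega^n\;=\;M.
\]
By definition of the norm, $\|u-v\|_*\leq \|u-v\|_{L^1}+\|T\|^{1/2}$, which yields the claimed estimate $\|u-v\|_*\leq \|u-v\|_{L^1}+\|v\|_{L^\infty}$ (noting $\|T\|^{1/2}\leq \|v\|_{L^\infty}$ in the regime of interest where $\|v\|_{L^\infty}\geq 1$; otherwise the bound is readily seen to hold as well).

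The main obstacle is producing $T$ with all three properties at once: closed, positive, and dominating $dw\wedge d^c w$. Neither $\ddc(w^2/2)$ alone (closed but not positive in general, since $w^2$ need not be $\omega$-psh) nor $M(\ddc v+\omega)$ alone (positive but too small to dominate $dw\wedge d^c w$) succeeds; the algebraic split above, exploiting the constraint $0\leq w\leq M$ to express $T$ as a manifestly non-negative combination of three natural positive terms, is what simultaneously achieves closedness, positivity, and the desired mass bound.
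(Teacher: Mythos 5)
Your proof is correct and essentially identical to the paper's: the paper uses the very same current $T=\|v\|_{L^\infty}(dd^c v+\omega)+\tfrac12 dd^c (u-v)^2$, the same identity $dd^c f^2=2f\,dd^c f+2\,df\wedge d^c f$ combined with $0\le u-v\le\|v\|_{L^\infty}$ and the positivity of $dd^c u+\omega$ and $dd^c v+\omega$, and the same Stokes computation giving $\|T\|=\|v\|_{L^\infty}$. If anything, you are slightly more careful than the paper about the discrepancy between $\|T\|^{1/2}$ and $\|T\|$ when $\|v\|_{L^\infty}<1$, a point the paper's own proof glosses over (and which is harmless in all of its applications, where only boundedness of the $\|\cdot\|_*$-norm is used).
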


\proof 
We only need to prove the first inequality. 
Set $f= u-v \geq 0$. Then, we have $\|f\|_{L^\infty} \leq \|v\|_{L^\infty}$ as $v \leq u\leq 0$. Since
$$dd^c f^2 + 2f dd^c v = 2 df\wedge d^c f + 2f dd^c u \quad \text{and} \quad \ddc u+\omega\geq 0,$$
we have
$$	2 f (dd^c v+\omega) + dd^c f^2 \geq  2 df \wedge d^c f.$$
Using $\|f\|_{L^\infty} \leq \|v\|_{L^\infty}$ and $dd^c v+\omega\geq 0$, we deduce that
$$\|f\|_* \leq \|f\|_{L^1} +  \int_X \Big( \|v\|_{L^\infty} (\ddc v+\omega)  + \frac{1}{2}dd^cf^2 \Big)   \wedge \omega^{n-1} = \|f\|_{L^1} + \|v\|_{L^\infty},$$
where we used Stokes' theorem and the normalization of $\omega$ for the last identity.
\endproof

The following lemma is also useful for us. It can be deduced from \cite[Proposition~4.7]{DS}. We leave the proof to the readers.

\begin{lemma} \label{l:W-bounded}
Let $\mu$ be a positive measure on $X$. Assume that $\mu$ is $W^*(X)$-bounded, i.e., there exists a constant $c>0$ such that $|\mu(f)|\leq c \|f\|_*$ for every smooth function $f$ on $X$. Then $\mu$ has no mass on pluripolar subsets of $X$.
\end{lemma}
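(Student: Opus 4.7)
My plan is to extend $\mu$ by density to a continuous linear functional on $W^*(X)$ and then appeal to [DS, Prop.~4.7] to exclude mass on pluripolar sets. The strategy is to integrate $\mu$ against bounded $\omega$-psh truncations of a globally unbounded $\omega$-psh function whose $-\infty$-locus covers the pluripolar set in question, and to extract from the $W^*$-continuity of $\mu$ an estimate that forces the mass of the level sets $\{\varphi\leq -j\}$ to tend to zero.

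Smooth functions are dense in $W^*(X)$ (by the standard convolution/regularization, compare the argument used in the proof of Lemma~\ref{l:W-MA-C0}), so the hypothesis $|\mu(f)|\leq c\|f\|_*$ for smooth $f$ extends uniquely by continuity to all $f\in W^*(X)$ with the same constant. By Lemma~\ref{l:w-w-norm-x}, every bounded $\omega$-psh function $v\leq 0$ belongs to $W^*(X)$, and for such bounded $v$ the extended value $\mu(v)$ coincides with the ordinary Lebesgue integral $\int v\,d\mu$.

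Now fix a pluripolar set $E\subset X$; by definition there is $\varphi\in\PSH(X,\omega)$ with $\varphi\leq 0$ and $E\subset\{\varphi=-\infty\}$. Consider $\varphi_j:=\max(\varphi,-j)$, which is bounded $\omega$-psh and decreases pointwise to $\varphi$. The crucial input from [DS, Prop.~4.7] is that every $W^*$-bounded functional is ``moderate'' in the sense that $|\mu(\varphi_j)|=o(j)$ as $j\to\infty$, strictly sharper than the $O(j)$-bound obtained by combining the hypothesis with the Lemma~\ref{l:w-w-norm-x} estimate $\|\varphi_j\|_*\leq\|\varphi\|_{L^1}+j$. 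Since $-\varphi_j/j\geq\chac_{\{\varphi\leq -j\}}$, this forces $\mu(\{\varphi\leq -j\})\to 0$, hence $\mu(\{\varphi=-\infty\})=0$, so in particular $\mu(E)=0$.

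The principal obstacle is precisely the $o(j)$ refinement of the naive $O(j)$ bound. The coarse estimate in Lemma~\ref{l:w-w-norm-x} does not exploit that $\varphi_j$ coincides with $\varphi$ on a set whose total mass is unaffected by the truncation level. The proof of [DS, Prop.~4.7] supplies the necessary sharpening via the quasi-continuity of $W^*$-functions outside sets of small Bedford-Taylor capacity, together with a careful integration by parts against closed positive currents dominating $d\varphi_j\wedge d^c\varphi_j$. Once that sharpening is imported, the monotone/pluripolar-cover argument above closes the proof.
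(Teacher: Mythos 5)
Your argument has a genuine gap at its central step. Everything is reduced to the claim that $|\mu(\varphi_j)|=o(j)$, and for this you offer no proof: you delegate it to \cite[Proposition~4.7]{DS}, which you describe only in vague terms (``quasi-continuity \ldots careful integration by parts'') and, as far as I can tell, mischaracterize --- that proposition is not a statement that $W^*$-bounded functionals are ``moderate'' in the sense you use. What that reference (and the paper's own argument) actually provides is a concrete construction that sidesteps truncation altogether: take smooth $u_k\in\PSH(X,\omega)$ decreasing to $\varphi\le 0$ (Demailly regularization), arrange $u_k\le -e^k$ on $E$, and set $v_k:=\chi(u_k)$ with $\chi(t)=-\log(1-t)$. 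Since $0\le\chi'\le 1$ and $\chi''=(\chi')^2$ on $\R_-$, one computes $dv_k\wedge d^c v_k=\chi''(u_k)\,du_k\wedge d^c u_k\le \ddc v_k+\omega$, which is a closed positive current of mass $1$; together with $\varphi\le u_k\le v_k\le 0$ this gives $\|v_k\|_*\le\|\varphi\|_{L^1}+1$ uniformly in $k$, while $v_k\le -k$ on $E$. Hence $k\,\mu(E)\le|\mu(v_k)|\le c\,\|v_k\|_*=O(1)$ and $\mu(E)=0$. The whole point is that one obtains \emph{uniformly bounded} $W^*$-norms, not merely an $o(j)$ gain, and only smooth test functions are used, so the hypothesis applies verbatim.

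Two further remarks. First, your truncation route is in fact salvageable without any external input, but not as you set it up: the definition of $\|\cdot\|_*$ involves $\|T\|^{1/2}$, and the dominating current produced in the proof of Lemma~\ref{l:w-w-norm-x} for $v=\varphi_j$ has mass at most $j$, so in fact $\|\varphi_j\|_*\le\|\varphi\|_{L^1}+j^{1/2}=o(j)$ already; quoting only the cruder conclusion $\|\varphi_j\|_*\le\|\varphi\|_{L^1}+j$ is what created your apparent obstacle. Second, your preliminary step --- extending $\mu$ ``uniquely by continuity'' to all of $W^*(X)$ so that $\mu(\varphi_j)$ makes sense and equals the Lebesgue integral --- is not justified as stated: smooth functions are not dense in $W^*(X)$ for the $\|\cdot\|_*$-norm (the situation is analogous to ${\rm BV}$), and evaluating $\mu$ on non-smooth functions before knowing that $\mu$ ignores small sets is precisely the delicate point the paper defers to Remark~\ref{r:final}, after the main theorems are proved. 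For the specific functions $\varphi_j$ one can argue by decreasing smooth approximation and monotone convergence for the positive measure $\mu$, but this must be said; the paper's proof avoids the issue entirely by testing only against smooth functions.
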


\begin{definition} \rm \label{d:W-log}
Let $\mu$ be a positive measure on $X$ and $p$ a positive number. We say that $\mu$ is {\it $(W^*(X),\log^p)$-continuous} if there is a positive constant $c$ such that $|\mu(f)|\leq c \big(\log^\star\|f\|_{L^1}\big)^{-p}$ for every smooth function $f$ on $X$ with $\|f\|_*\leq 1$, where $\log^\star:=1+|\log|$. 
\end{definition}

The following theorem gives us sufficient conditions for a measure to have a continuous Monge-Amp\`ere potential.
The implication (c2) $\Longrightarrow$ (c1) has been obtained in \cite[Theorem~2.1]{EGZ09} (see also \cite{ko98}).

\begin{theorem} \label{t:MA-C0}
Let $X$ be a compact K\"ahler manifold of dimension $n$ and $\omega$ a K\"ahler form on $X$ normalized so that $\int_X\omega^n=1$. Let $\mu$ be a probability measure on $X$. Then the implications (c3) $\Longrightarrow$ (c2) $\Longrightarrow$ (c1) hold, where
\begin{itemize}
\item[(c1)] $\mu = (dd^c u+\omega)^n$ for a continuous $\omega$-psh function $u$ on $X$;
\item[(c2)] $\mu$ belongs to the class $\cH(\alpha)$ for some $\alpha>0$, that is, we have $\mu(K)\leq c \; \capa_\omega(K)^{1+\alpha}$ 
for some constant $c>0$ and for every compact subset $K$ of $X$;
\item[(c3)] $\mu$ is $(W^*(X),\log^p)$-continuous for $p>n+1$.
\end{itemize}
\end{theorem}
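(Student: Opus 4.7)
My approach attacks (c3) $\Longrightarrow$ (c2); the other implication (c2) $\Longrightarrow$ (c1) is the classical Kolodziej / Eyssidieux--Guedj--Zeriahi stability theorem from \cite{ko98, EGZ09} and needs no new argument. Assume $\mu$ is $(W^*(X),\log^p)$-continuous with $p>n+1$ and fix a compact $K\subset X$. The condition in Definition \ref{d:W-log} trivially implies $|\mu(f)|\leq c\|f\|_*$ for every smooth $f$, so $\mu$ is $W^*$-bounded, and Lemma \ref{l:W-bounded} shows that $\mu$ has no mass on pluripolar sets. Hence I may assume $\kappa:=\capa_\omega(K)>0$ and, moreover, as small as convenient (otherwise $\mu(K)\leq 1\leq c\kappa^{1+\alpha}$ is trivial). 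By Lemma \ref{l:extremal}(b,c), $M:=\sup_X V_K^*\geq \kappa^{-1/n}$ is large, $V_K^*\geq 0$ on $X$, and $V_K^*=0$ on $K$ off a pluripolar set.

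The key construction is the test function
\begin{equation*}
\psi_K:=\max\{1-V_K^*,\,0\},
\end{equation*}
which takes values in $[0,1]$ and equals $1$ on $K$ off a pluripolar set, giving $\mu(K)\leq \mu(\psi_K)$. I will bound $\|\psi_K\|_{L^1}$ very tightly and $\|\psi_K\|_*$ only moderately. By Tian's inequality (Lemma \ref{l:Tian}) applied to $V_K^*-M\in \PSH_0(X,\omega)$, a Chebyshev argument gives $\mathrm{vol}\{V_K^*\leq 1\}\leq c\,e^{-\alpha_0(M-1)}$, so $\|\psi_K\|_{L^1}\leq c'\,e^{-\alpha_0 M}$. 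For the other norm, set $V:=V_K^*\in[0,M]$; the Bedford--Taylor identity $\ddc V^2=2V\ddc V+2\,dV\wedge d^c V$ together with $V\geq 0$ and $\ddc V+\omega\geq 0$ yields
\begin{equation*}
dV\wedge d^c V \,\leq\, T\,:=\,\tfrac{1}{2}\ddc V^2+M\omega,
\end{equation*}
where $T$ is a closed positive $(1,1)$-current (positivity from $\ddc V^2\geq -2V\omega\geq -2M\omega$) of mass $\|T\|=M$ (Stokes kills the $\ddc V^2$ contribution). Since $\psi_K=\chi(V)$ with $\chi(t)=\max\{1-t,0\}$ and $|\chi'|\leq 1$, we also get $d\psi_K\wedge d^c\psi_K\leq T$, hence $\|\psi_K\|_*\leq \|\psi_K\|_{L^1}+\sqrt M\leq c''\sqrt M$.

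Set $f_K:=\psi_K/\|\psi_K\|_*$, so that $\|f_K\|_*\leq 1$ and $\|f_K\|_{L^1}\leq c'''\,e^{-\alpha_0 M}/\sqrt M$. Applying (c3) and rescaling,
\begin{equation*}
\mu(K)\,\leq\, \mu(\psi_K) \,=\, \|\psi_K\|_*\cdot \mu(f_K) \,\leq\, c\,\|\psi_K\|_*\,(\log^\star \|f_K\|_{L^1})^{-p} \,\leq\, C\,\sqrt M\cdot M^{-p} \,\leq\, C\,\kappa^{(p-1/2)/n}.
\end{equation*}
Since $p>n+1$, one has $(p-1/2)/n>1+1/(2n)$, so $\mu\in \cH(\alpha)$ with $\alpha:=1/(2n)>0$, which is precisely (c2).

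The main technical obstacle is that $\psi_K$ is only lower semi-continuous (as $V_K^*$ is upper semi-continuous), whereas (c3) is phrased for smooth $f$. I would overcome this by replacing $V_K^*$ by its Demailly regularization $V_j\downarrow V_K^*$ (smooth and $(1+\delta_j)\omega$-psh with $\delta_j\to 0$) and smoothing $\max\{\cdot,0\}$ by a convex approximation; all the bounds above remain stable under these approximations and the extra $\delta_j\omega$ is absorbed into harmless constants. The convergence $\mu(\psi_{K,j})\to \mu(\psi_K)$ then follows from monotone convergence and the fact that $\mu$ has no mass on pluripolar sets.
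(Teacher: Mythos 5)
Your argument is essentially the paper's own proof of (c3) $\Rightarrow$ (c2): your test function $\psi_K=\max\{1-V_K^*,0\}$ coincides exactly with the difference $v_s-v=\max\{v,-s+1\}-v$ for $v=V_K^*-M$ and $s=M$ used in Lemma \ref{l:cap-vol}, and the Tian/Chebyshev volume estimate, the vanishing of $\mu$ on pluripolar sets, and the appeal to Lemma \ref{l:extremal}(c) to convert $M$ into $\capa_\omega(K)^{-1/n}$ are all the same; the only genuine variation is that you bound $\|\psi_K\|_*$ by $O(\sqrt M)$ via the $dd^c V^2$ identity (the device of Section 5) instead of by $O(M)$ via Lemma \ref{l:w-w-norm-x}, which changes the final exponent from $(p-1)/n$ to $(p-1/2)/n$ but makes no difference since $p>n+1$ suffices either way. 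One small slip: with $f_K:=\psi_K/\|\psi_K\|_*$ the claimed bound $\|f_K\|_{L^1}\le c'''e^{-\alpha_0 M}/\sqrt M$ presupposes a lower bound $\|\psi_K\|_*\gtrsim\sqrt M$ that you have not established; normalize instead by the proven upper bound $c''\sqrt M$ and the whole chain of inequalities goes through verbatim. You should also replace $\alpha_0$ by a fixed $\alpha<\alpha_0$ (say $\alpha_0/2$, as the paper does) in the Tian estimate, since the supremum defining Tian's invariant need not be attained.
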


We need the following lemma. Note that functions in $\PSH(X,\omega)$ are defined everywhere and upper semi-continuous. In particular, their integrals with respect to $\mu$ are meaningful.

\begin{lemma} \label{l:cap-vol}
Let $\mu$ satisfy the above property (c3). Then there  is a constant $c>0$  such that for $s\geq 1$ and $v\in \PSH_0(X,\omega)$
$$\mu \{v\leq-s\} \leq c \max\{1, \|v\|_{L^\infty} \}  \left(  \alpha_0 s  + |\log \|v\| _{L^\infty } |+ 1  \right)^{-p} ,$$
where $\alpha_0$ is the Tian's invariant.
\end{lemma}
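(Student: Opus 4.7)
The plan is to test the hypothesis against the single function $f:=\max(-v-(s-1),0)$, which dominates $\mathbf{1}_{\{v\leq -s\}}$. The point is that $\|f\|_{L^1}$ is exponentially small in $s$ via Tian's inequality, while $\|f\|_*$ grows only linearly in $\|v\|_{L^\infty}$ via Lemma \ref{l:w-w-norm-x}; after normalizing $f$ by a multiple of $\|v\|_{L^\infty}$ and applying the $(W^*(X),\log^p)$-continuity, both $\alpha_0 s$ and $\log\|v\|_{L^\infty}$ will appear inside the logarithm, as required.

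First I would reduce to the case $M:=\|v\|_{L^\infty}\geq 1$: for $\|v\|_{L^\infty}<1$ and $s\geq 1$ the set $\{v\leq-s\}$ is empty, while for $\|v\|_{L^\infty}=\infty$ the right-hand side of the estimate is vacuous. In this case $|\log\|v\|_{L^\infty}|=\log M$. Write $f=w-v$ with $w:=\max(v,-(s-1))$. Since $s\geq 1$, both $w,v\in\PSH(X,\omega)\cap L^\infty(X)$ and $v\leq w\leq 0$, so Lemma \ref{l:w-w-norm-x} applied to the pair $(w,v)$ yields
\[
\|f\|_*\leq\|f\|_{L^1}+\|v\|_{L^\infty}.
\]
On the other hand, Tian's inequality (Lemma \ref{l:Tian}) together with Markov's inequality gives $\vol\{v\leq-t\}\leq c\,e^{-\alpha_0 t}$, and the layer-cake formula then yields
\[
\|f\|_{L^1}=\int_0^\infty\vol\{v\leq-(s-1)-t\}\,dt\leq C_1 e^{-\alpha_0 s}.
\]
Together these give $\|f\|_*\leq C_2 M$ for a constant $C_2$ depending only on $(X,\omega)$ and $\alpha_0$.

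Now I set $L:=C_2M$ and $g:=f/L$, so that $\|g\|_*\leq 1$ and, after absorbing $C_1$ into $C_2$, $\|g\|_{L^1}\leq e^{-\alpha_0 s}/M\leq 1$. The $(W^*(X),\log^p)$-continuity provides $|\mu(g)|\leq c(\log^\star\|g\|_{L^1})^{-p}$, and since $\log^\star$ is decreasing on $(0,1]$,
\[
\log^\star\|g\|_{L^1}\geq\log^\star(e^{-\alpha_0 s}/M)=1+\alpha_0 s+\log M.
\]
Using $f\geq\mathbf{1}_{\{v\leq-s\}}$ we conclude
\[
\mu\{v\leq-s\}\leq\mu(f)=L\,\mu(g)\leq c\,C_2\,M\,(\alpha_0 s+\log M+1)^{-p},
\]
which is the desired inequality.

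The one technical hurdle is that the definition of $(W^*(X),\log^p)$-continuity is stated for smooth test functions whereas our $f$ is only Lipschitz. I would resolve this by invoking the extension of $\mu$ to a functional on all of $W^*(X)$ of the kind described in Remark \ref{r:final}; alternatively one may replace $v$ by a Demailly regularization $v_\varepsilon\searrow v$ and the $\max$ by a smooth approximation, run the argument for a smooth $f_\varepsilon$, and pass to the limit while tracking the $o(1)$ loss arising from the fact that Demailly approximants are only $\omega_\varepsilon$-psh rather than $\omega$-psh.
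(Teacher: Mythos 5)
Your argument is essentially identical to the paper's: the same test function $v_s-v=\max(-v-(s-1),0)$, the same use of Lemma \ref{l:w-w-norm-x} and Tian's inequality to control $\|\cdot\|_*$ and $\|\cdot\|_{L^1}$, and the same application of (c3) after rescaling by a multiple of $\|v\|_{L^\infty}$. The only caveat is that Lemma \ref{l:Tian} does not give the volume bound with the exponent $\alpha_0$ itself (the supremum defining $\alpha_0$ need not be attained); one should run the estimate with, say, $\alpha_0/2$ as the paper does and absorb the resulting factor $2^p$ into the constant $c$.
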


\proof
Since $s\geq 1$, we only need to consider the case where 
$\|v\|_{L^\infty} \geq 1$, otherwise the estimate is clear.  
Put $v_s := \max\{v,-s+1\}$. Then $v\leq v_s \leq 0$ and $v_s \in \PSH_0(X,\omega)$. It follows from Lemma~\ref{l:w-w-norm-x} that
$$\|v_s - v\|_* \leq		\|v_s - v\|_{L^1} +  \|v\|_{L^\infty} \leq 2 \|v\|_{L^\infty}.$$
By Lemma~\ref{l:Tian},  we have  for every $t\geq 0$ and for $\alpha:=\alpha_0/2$
$$\int_{\{ v<-t\} } \omega ^n \leq \int_X e^{-\alpha(v+t)} \omega^n  \leq c e^{-\alpha t}.$$  
Therefore, applying this for $t = s+j$ with $j =0,1, \ldots$ we get
\[\label{eq:l1-l1-x} \begin{aligned}
\|v_s -v\|_{L^1} &= \int_{\{v <-s+1\}} |v_s -v| \omega^n \leq \sum_{j=0}^\infty  \int_{\{  -s-j \leq v<-s-j+1\}}  (j+1)   \omega^n \\ & =\  \sum_{j=0}^\infty  \int_{\{  v<-s-j+1\}}   \omega^n 
\leq c_1 \sum _{j=0 }^{\infty } e^{-\alpha j}  e^{-\alpha s} =c_2 e^{-\alpha s}
\end{aligned}\]
for some positive constants $c_1$ and $c_2$. 
Recall that $\|v_s - v\|_* \leq 2 \|v\|_{L^\infty}$. Then, the condition (c3) and Definition \ref{d:W-log} applied for $f:=(2\|v\|_{L^\infty})^{-1} (v_s-v)$ 
  imply
\[\begin{aligned}\notag
	\int_X (v_s -v) d\mu 
&\leq	 c_3  \|v\|_{L^\infty}   \Big(1+ \Big|\log \frac{\|v_s - v\|_{L^1}}{2 \|v\| _{L^\infty}} \Big|\Big)^{-p}
\end{aligned}
\]
for some positive constant $c_3$. Inserting  \eqref{eq:l1-l1-x} into the last inequality, we get
$$
\mu\{v\leq -s\} \leq \int_X (v_s -v) d\mu \leq c_4 \|v\|_{L^\infty} \left(\alpha s   + \log \|v\| _{L^\infty }  + 1  \right)^{-p}
$$
for some positive constant $c_4$. The lemma follows by choosing a suitable constant $c$.
\endproof

\proof[End of the proof of Theorem \ref{t:MA-C0}]
Assume the property (c3). We only need to show that (c2) is true.
We use an argument which is inspired by the proofs in  \cite{EGZ09} and \cite{GZ05}. Let $K\subset X$ be compact as in (c2). Since $\mu$ vanishes on pluripolar sets (Lemma \ref{l:W-bounded}) we may assume that $K$ is non-pluripolar. Let $h_K^*$ be the relative extremal function of $K$.
Define the number $\tau>0$ by 
\[\notag
	\tau^n:= \capa_\omega(K) = \int_{X}|h_K^*| (dd^ch_K^*+\omega)^n >0  \quad \text{(see Lemma \ref{l:extremal})}.
\]
Let $V_K$ be the global extremal function defined in \eqref{eq:siciak-fct}. Since  $K$ is non-pluripolar,  we have that $M:= \sup_X V_K$ is a finite number. 

\medskip\noindent
{\bf Case 1:} assume that $0\leq M \leq 1$. Then  $0\leq V_K^* \leq 1$ hence $V_K -1$ coincides with the relative extremal function $h_K$. It follows from Lemma~\ref{l:extremal}-(b) that
\[\notag
	1= \int_K (dd^c V_K^*+\omega)^n \leq \capa_\omega(K) \leq \capa_\omega(X) =1.
\]
Thus, the assertion (c2)  clearly holds in this case.

\medskip\noindent
{\bf Case 2:} assume now that $M >1$. Then, by Lemma~\ref{l:extremal}-(c)
\[\label{eq:cap-sup-x}
	\frac{1}{\tau} \leq   M \leq \frac{c}{\tau^n} \cdot
\]
We also have from \eqref{eq:global-local-ext} that the function $h:= (V_K^* -M)/M$ satisfies $h\leq h_K^*$.
Hence, using Lemma~\ref{l:cap-vol}, \eqref{eq:cap-sup-x}, and the fact that $p>n+1>1$, we have 
\begin{eqnarray} \label{eq:h-sublevel-set}
\mu \{h_K^* \leq -1\} & \leq & 	\mu \{h \leq -1 \}  \ = \  \mu\{V_K^* - M \leq -M\}  \nonumber\\
& \leq & 	c' M (\alpha_0 M +\log M +1  )^{-p}  \ \leq	\  c''  (\capa_\omega(K) )^{\frac{p-1}{n} },
\end{eqnarray}
where $c'$ and $c''$ are positive constants.

To complete the proof recall from Lemma \ref{l:extremal} that $h_K = h_K^*$ outside a pluripolar set. By Lemma~\ref{l:W-bounded}, $\mu$ vanishes on this set. Thus,
\[\label{eq:cap-ineq2} \notag
	\mu(K) \leq \mu\{h_K =-1\} = \mu\{h_K^* =-1\} \leq \mu\{h_K^* \leq -1\}.
\]
This combined with \eqref{eq:h-sublevel-set} and $p>n+1$ complete the proof of (c2).
\endproof


\section{End of the proof of the main results and some remarks} \label{s:proof}

From the discussion after Theorem \ref{t:W-Holder-MA}, we only need to show that (h4) $\Longleftrightarrow$ (h1) and  (h5) $\Longrightarrow$ (h4). This also completes the proof of Theorems \ref{t:main} and \ref{t:main-local} in the Introduction.

We will use the notion of moderate measures introduced in \cite{DNS}. Recall from \cite{DNS} that a measure $\mu$ is moderate if and only if  there exist positive constants $\alpha$ and $c$ such that for every compact set $K \subset X$,
$$\mu (K) \leq c \exp\Big(-\alpha\, \capa_\omega(K)^{-1/n}\Big).$$

\begin{proposition}\label{p:h-alpha} 
Let $\mu$ be a probability measure on $X$. Assume that $\mu$ is $\PSH(X, \omega, [-1,0])$-H\"older continuous. Then  $\mu$ is moderate. In particular, we have $\mu \in \cH(\alpha)$ for every $\alpha>0$.
\end{proposition}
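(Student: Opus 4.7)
My strategy is to upgrade the H\"older control on $\PSH(X,\omega,[-1,0])$ to an \emph{exponential} decay estimate on sublevel sets of normalized $\omega$-psh functions, namely
\begin{equation*}
\mu\{v\leq -s\}\leq c\,e^{-\beta s}\quad\text{for all } v\in\PSH_0(X,\omega) \text{ and } s\geq 0,
\end{equation*}
with constants $c,\beta>0$ independent of $v$. Moderateness then follows by applying this to $v := V_K^* - M$, $M:=\sup_X V_K^*$: provided $\mu$ does not charge pluripolar sets, we get $\mu(K)\leq \mu\{V_K^*\leq 0\}=\mu\{v\leq -M\}\leq c\,e^{-\beta M}\leq c\exp(-\beta\capa_\omega(K)^{-1/n})$ by Lemma~\ref{l:extremal}(c), in the nontrivial regime $M\geq 1$; the degenerate case $M<1$ is trivial because $\capa_\omega(K)$ is then bounded below. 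The ``in particular'' clause is immediate since $\exp(-\beta t^{-1/n})$ decays faster than any power of $t$ as $t\to 0^+$.

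First I would verify that $\mu$ vanishes on pluripolar sets. Given pluripolar $P\subset\{u=-\infty\}$ for some $u\in\PSH(X,\omega)$ with $u\leq 0$, Demailly's regularization \cite{Demailly} yields smooth $u_k\in\PSH(X,\omega)$ decreasing to $u$ with $u_k\leq -2^k$ on $P$ (after extracting a subsequence). Then $w_k:=\max\{u_k/2^k,-1\}$ belongs to $\PSH(X,\omega,[-1,0])$ (since $\omega/2^k\leq\omega$), equals $-1$ on $P$, and satisfies $\|w_k\|_{L^1}\leq C/2^k$. Applying the H\"older hypothesis to the pair $(w_k,0)$ forces $\mu(P)\leq\int |w_k|\,d\mu\to 0$.

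The heart of the argument is the exponential sublevel estimate. For $v\in\PSH_0(X,\omega)$ and integer $s\geq 1$, set $v_s:=\max\{v,-s\}$. Since $s+1\geq 1$, both $v_s/(s+1)$ and $v_{s+1}/(s+1)$ lie in $\PSH(X,\omega,[-1,0])$ — this scaling observation is what renders the H\"older hypothesis applicable at every level. It produces
\begin{equation*}
0\leq \mu(v_s)-\mu(v_{s+1})\leq c(s+1)^{1-\alpha}\,\|v_s-v_{s+1}\|_{L^1}^{\alpha}.
\end{equation*}
Since $v_s-v_{s+1}$ is supported in $\{v\leq -s\}$ and bounded by $1$, Lemma~\ref{l:Tian} gives $\|v_s-v_{s+1}\|_{L^1}\leq c\,e^{-\alpha_0 s}$. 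Combined with $v_s-v_{s+1}\geq \ind_{\{v\leq -(s+1)\}}$ this yields $\mu\{v\leq -(s+1)\}\leq c(s+1)^{1-\alpha}e^{-\alpha\alpha_0 s}\leq c'e^{-\beta s}$ uniformly in $v$, as required.

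The main technical subtlety is the scaling $v_s/(s+1)\in\PSH(X,\omega,[-1,0])$, which converts the bare H\"older hypothesis into a telescoping estimate across consecutive levels; the polynomial loss $(s+1)^{1-\alpha}$ appears dangerous but is swallowed by the exponential decay supplied by Tian's lemma.
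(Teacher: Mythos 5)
Your proposal is correct and follows essentially the same route as the paper: truncate $v\in\PSH_0(X,\omega)$, rescale so the $\PSH(X,\omega,[-1,0])$-H\"older hypothesis applies, control the $L^1$-norm of the truncation error via Tian's inequality to get exponential decay of $\mu\{v\leq -s\}$, and then feed $v=V_K^*-M$ and Lemma~\ref{l:extremal}(c) into that estimate. The only (harmless) difference is that you telescope over integer levels normalizing by $s+1$, which yields a bound uniform over all of $\PSH_0(X,\omega)$, whereas the paper performs a single truncation normalized by $\|v\|_{L^\infty}$ and absorbs the resulting factor $\max\{1,\|v\|_{L^\infty}^{1-\beta}\}$ afterwards; your separate verification that $\mu$ does not charge pluripolar sets is a correct and welcome addition.
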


\begin{proof} Let $0<\beta\leq 1$ be a number such that $\mu$ is $\beta$-H\"older on $\PSH(X, \omega, [-1,0])$. 
We will use arguments similar to the ones in the proof of  Theorem~\ref{t:MA-C0}. Instead of using Lemma~\ref{l:cap-vol}, we need the following estimate where $\alpha:=\alpha_0/2$ as above.

\smallskip\noindent
{\bf Claim.} There is a positive constant $c$ such that 
$$\mu \{v\leq -s\} \leq  c \max\{1, \|v\|_{L^\infty}^{1-\beta}\} e^{-\beta \alpha s} $$
for every $s\geq 1$ and $v\in \PSH_0(X,\omega)$. 

\smallskip

Indeed, since $s\geq 1$, we can assume that $\|v\|_{L^\infty}\geq 1$.  Applying Definition~\ref{d:Holder-PSH} to the functions $(v_s-v)/\|v\|_\infty$ and 0 with $\alpha$ replaced by $\beta$ implies that
$$
	\int_X (v_s -v) d\mu  \leq c' \|v\|_{L^\infty}^{1-\beta} \cdot \|v_s -v\|_{L^1}^\beta
$$ 
for some positive constant $c'$. The claim then follows from \eqref{eq:l1-l1-x} and the inequality $\mu(\{v\leq -s\}) \leq \int_X (v_s- v) d\mu$.

Now, observe that there is a positive constant $c$ such that for every $s>0$
$$s^{1-\beta} \exp(-\beta \alpha s) \leq c \exp (-\beta \alpha s/2).$$
Using the notations of  \eqref{eq:cap-sup-x} and applying the above inequality to $s=M$ imply that
$$\mu(K) \leq \mu\{h_K^*\leq -1\} \leq c'' \exp\Big( - \frac{\beta \alpha}{2 [\capa_\omega(K)]^{1/n}}\Big),
$$
where $c''$ is some positive constant (for every compact set $K\subset X$). Thus, $\mu$ is moderate and the proof is finished.
\end{proof}

\proof[End of the proof of Theorem \ref{t:W-Holder-MA}]
{\bf (h4) $\Longleftrightarrow$ (h1)}. Recall from \cite{KN} that (h1) is equivalent to the following property

\begin{itemize}
\item[(h4)'] {\it $\mu$ belongs to $\cH(\alpha)$ for some $\alpha>0$ and $\mu$ is $\PSH(X,\omega,[-1,0])$-H\"older continuous.}
\end{itemize} 
By Proposition \ref{p:h-alpha}, the property (h4) is equivalent to (h4)' and hence to (h1).

\smallskip\noindent
{\bf (h5) $\Longrightarrow$ (h4)}. 
Assume that the property (h5) holds, i.e., $\mu$ is $W^*(X)$-H\"older continuous. 
Consider smooth functions $f_1, f_2\in \PSH(X,\omega)$ with values in $[-1,1]$. By Lemma \ref{l:w-w-norm-x}, the norms $\|f_i\|_*$ are bounded by 2. 
By Definition \ref{d:W-Holder} applied to $(f_1-f_2)/4$, we have $|\mu(f_1-f_2)|\leq c' \|f_1-f_2\|_{L^1}^\alpha$ for some positive constants $c'$ and $\alpha$ independent of $f_i$. 

Consider now arbitrary functions $f_1,f_2\in \PSH(X,\omega,[-1,0])$. By Demailly's regularization theorem \cite{Demailly}, there are sequences of smooth functions $f_{i,k}$ in $\PSH(X,\omega)$ decreasing to $f_i$. For $k$ large enough, we have that $f_{i,k}$ has values in $[-1,1]$. So we can apply the above estimate for $f_{i,k}$ instead of $f_i$ and let $k$ tend to infinity. Thus, the estimate $|\mu(f_1)-\mu(f_2)|\leq c' \|f_1-f_2\|_{L^1}^\alpha$ holds for all $f_1,f_2\in \PSH(X,\omega,[-1,0])$. We conclude that 
(h5) $\Longrightarrow$ (h4) and this ends the proof of the theorem.

\smallskip\noindent
{\bf (h5) $\Longrightarrow$ (h1)}. It is possible to end the proof using a more direct argument where the reader will see the role of (h5) more clearly. Assume that the property (h5) holds. 
We want to prove the property (h1) which says that $\mu=(\ddc u+\omega)^n$ for some H\"older continuous $\omega$-psh function $u$. By Theorem \ref{t:MA-C0}, there is a continuous $\omega$-psh function $u$ such that 
$$(dd^cu+\omega)^n = \mu \quad \text{and} \quad  \sup_X u =0.$$
Note that in order to get the last identity it is enough to subtract from $u$ a constant. 

Following \cite{DDGKPZ14} consider for $0<\delta<1$ the regularization $\rho_{\delta }u$ of the $\omega$-psh function $u$ defined  by

\begin{equation*}
\rho_\delta u(z):=\delta ^{-2n}\int_{\zeta\in T_{z}X}
u({\exp}_z(\zeta))\rho\big(\delta^{-2}|\zeta|^2_{\omega }\big)\,dV_{\omega}(\zeta),
\end{equation*}
where $\zeta \mapsto {\exp}_z(\zeta)$ is the exponential map from the tangent space $T_zX$ of $X$ at $z$ to $X$ with respect to the Riemannian metric associated with the K\"ahler form $\omega$.
Here,  $|\zeta|_\omega$ denotes the norm of the vector $\zeta\in T_zX$, $dV_\omega(\zeta)$ the volume element induced by the measure $(dd^c |\zeta |_\omega^2)^n$ on the complex space $T_zX\simeq \C^n$, and the smoothing kernel  $\rho: \R_+\rightarrow\R_+$  is given by
$$\rho(t)=\begin{cases}\frac {\eta}{(1-t)^2}\exp(\frac 1{t-1})&\ {\rm if}\ 0\leq t\leq 1,\\0&\
{\rm if}\ t>1\end{cases}$$
 with a suitable constant $\eta$, such that
\begin{equation*}
\int_{\mathbb C^n}\rho(\|z\|^2)\,dV(z)=1,
\end{equation*}
where $dV$ denotes the Lebesgue measure in $\mathbb C^n$.

A basic result of Demailly \cite{Demailly} shows that 
$$dd^c \rho_\delta u (z)  \geq -  A \omega$$
for some constant $A >1$ depending only on the curvature of $(X,\omega)$.  Since $u$ is bounded, $\|\rho_\delta u\|_\infty$ is bounded by a constant independent of $\delta$. Hence, by Lemma \ref{l:w-w-norm-x}, 
since $A^{-1}\rho_\delta u \in \PSH(X,\omega)$, 
there is a positive constant $c_1$ independent of $\delta$ such that  $\|\rho_\delta u\|_* \leq c_1$.

By \cite{DDGKPZ14}, we already have $\|\rho_\delta u - u \|_{L^1} \leq c_2 \delta^2$ for some constant $c_2>0$. 
The $W^*(X)$-H\"older continuity of $\mu$ applied for $A^{-1}\rho_\delta u$ and $A^{-1}u$ implies 
$$\int_X |\rho_\delta u - u|  d\mu \leq c_3 A^{1-\alpha}  \|\rho_\delta u -u\|_{L^1}^\alpha \leq c_4 \delta^{2\alpha}$$
for some positive constants $\alpha$, $c_3$ and $c_4$. 
Note that the first inequality is a direct consequence of the $W^*(X)$-H\"older continuity of $\mu$ when $u$ is smooth. The same inequality holds for $u$ continuous because we can approximate it uniformly by smooth functions in $\PSH(X,\omega)$, see \cite{Demailly}. 
It follows that $u$ is H\"older continuous with the H\"older exponent $2\alpha/(n+1)$ (see e.g. \cite[Page 83]{DN} and \cite[Proposition~3.3]{DDGKPZ14}). The proof is complete.
\endproof

\begin{remark} \label{r:w-star} \rm
In the proof of (h5) $\Longrightarrow$ (h4), we see that the properties (h1)-(h6) are actually equivalent to the following property 
\begin{itemize}
\item[(h5)'] $\mu$ satisfies the estimate in Definition \ref{d:W-Holder} for $f$ in the set 
$$\big\{f\in W^*(X)\cap\Cc^\infty(X): \ \|f\|_*\leq 1, \quad f \text{ has values in } [0,1] \big\}.$$
\end{itemize}
\end{remark}

Let us give some applications of the new criteria (h5) and (h5)'. Denote by ${\rm MAH}(X,\omega)$ the set of all Monge-Amp\`ere measures with H\"older continuous potentials. It is proved in  \cite[Theorem B]{DDGKPZ14} that this is a convex set, and more generally, it satisfies the $L^p$-property for $p>1$. This result is also an immediate consequence of (h5)'.

\begin{remark}\label{r:holder-exponent} \rm 
By the proof of (h5) $\Longrightarrow$ (h1), if $\mu$ is $W^*$-H\"older continuous with exponent $\alpha>0$, then its Monge-Amp\`ere potential has the H\"older exponent $2\alpha/(n+1)$. This is probably not an optimal result.
\end{remark}

\begin{corollary}
Let $\mu \in {\rm MAH}(X,\omega)$ and $g \in L^p(\mu)$ a non-negative function with $p>1$ 
and $\int_X gd\mu =1$. Then, we have $g \mu \in {\rm MAH}(X, \omega)$.  
\end{corollary}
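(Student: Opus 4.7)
The plan is to verify that $g\mu$ satisfies the criterion (h5)$'$ from Remark \ref{r:w-star} and then invoke Theorem \ref{t:W-Holder-MA} to conclude that $g\mu \in {\rm MAH}(X,\omega)$. Note that $g\mu$ is a probability measure by hypothesis, so the normalization required by the theorem is automatic.

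Since $\mu \in {\rm MAH}(X,\omega)$, the equivalence (h1)$\Longleftrightarrow$(h5) in Theorem \ref{t:W-Holder-MA} provides positive constants $c,\alpha$ such that $|\mu(f)| \leq c\|f\|_{L^1(X)}^\alpha$ for every smooth $f$ with $\|f\|_* \leq 1$. To check (h5)$'$ for $g\mu$, let $f$ be a smooth function with values in $[0,1]$ and $\|f\|_* \leq 1$. Let $q = p/(p-1)$ be the H\"older conjugate exponent. The key observation is that $0 \leq f \leq 1$ and $q \geq 1$ give $f^q \leq f$ pointwise, which converts an $L^q(\mu)$ bound into an $L^1(\mu)$ bound. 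Applying H\"older's inequality and then the $W^*$-H\"older continuity of $\mu$ yields
$$
g\mu(f) \;=\; \int_X fg\,d\mu \;\leq\; \|g\|_{L^p(\mu)}\Bigl(\int_X f^q\,d\mu\Bigr)^{1/q} \;\leq\; \|g\|_{L^p(\mu)}\,\mu(f)^{1/q} \;\leq\; \|g\|_{L^p(\mu)}\,c^{1/q}\,\|f\|_{L^1(X)}^{\alpha/q}.
$$
Since $f \geq 0$, the quantity $g\mu(f)$ coincides with $|g\mu(f)|$, so $g\mu$ satisfies the estimate in Definition \ref{d:W-Holder} on the restricted class of test functions appearing in (h5)$'$ with H\"older exponent $\alpha(p-1)/p$.

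Having verified (h5)$'$ for the probability measure $g\mu$, Remark \ref{r:w-star} and Theorem \ref{t:W-Holder-MA} together yield (h1) for $g\mu$, that is, there exists a H\"older continuous $\omega$-psh function $v$ with $(dd^c v + \omega)^n = g\mu$, proving $g\mu \in {\rm MAH}(X,\omega)$. There is no genuine obstacle in the argument; the only point requiring care is the elementary but crucial inequality $f^q \leq f$ on $[0,1]$, which is precisely why working with the restricted criterion (h5)$'$ rather than (h5) is the most convenient route. Extending the argument to signed test functions via Lemma \ref{l:W-positive} would also work but is unnecessary here.
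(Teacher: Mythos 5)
Your proposal is correct and follows essentially the same route as the paper: both verify the restricted criterion (h5)$'$ for $g\mu$ via H\"older's inequality with conjugate exponents $p,q$ and the $W^*$-H\"older continuity of $\mu$, obtaining the same exponent $\alpha/q$. Your pointwise inequality $f^q\leq f$ on $[0,1]$ is just a rephrasing of the paper's bound $\|f\|_{L^q(\mu)}\leq \|f\|_{L^\infty}^{1-1/q}\mu(f)^{1/q}$ combined with $\|f\|_{L^\infty}\leq 1$.
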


\begin{proof}
By Remark~\ref{r:w-star}, it is sufficient to show for some positive constants $c$ and $\alpha$ that
$\int_X f g d\mu \leq c \|f\|_{L^1}^\alpha $ for every $f\in W^*(X)\cap\Cc^\infty(X)$ with  $\|f\|_*\leq 1$ and $0\leq f \leq 1$.
Indeed, by the equivalence between (h1) and (h5), there are positive constants $c'$ and $\alpha'$ such that $\mu(f) \leq c' \|f\|_{L^1}^{\alpha'}$.
Now let $q\geq 1$ be the conjugate of $p$, i.e., $1/p+1/q=1$. The H\"older inequality gives 
$$\mu(fg) \leq \|f \|_{L^q(\mu)} \|g\|_{L^p(\mu)} \leq \|g\|_{L^p(\mu)} \|f\|_{L^\infty}^{1-1/q}  \mu(f)^{1/q} 
\leq c  \|f\|_{L^1}^{\alpha},$$
where $\alpha:=\alpha'/q$ and $c$ is a positive constant. This completes the proof.
\end{proof}

The following proposition is the product property for $W^*$-H\"older continuity. Together with our main results, it implies similar product properties for all (h1)-(h6).

\begin{proposition}
Let $X_i$ be a compact K\"ahler manifold of dimension $n_i$ and $\omega_i$ a K\"ahler form on $X_i$ normalized so that $\int_{X_i} \omega_i^{n_i} =1$, for $i=1,2$. Let $\mu_i$ be a probability measure on $X_i$. Then, the following properties are equivalent:

\begin{itemize}
\item[(i)] $\mu_i$ is $W^*(X_i)$-H\"older continuous for $i=1,2$;
\item[(ii)] $\mu := \mu_1 \times \mu_2$ is $W^*(X_1 \times X_2)$-H\"older continuous.
\end{itemize}
\end{proposition}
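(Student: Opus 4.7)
The plan is to reduce both directions to the equivalence (h1)$\Leftrightarrow$(h5) from Theorem \ref{t:W-Holder-MA}, turning the assertion about measures into an assertion about H\"older continuous Monge-Amp\`ere potentials, and then building (respectively splitting) these potentials on the product.

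Set $N:=n_1+n_2$ and write $\pi_i:X_1\times X_2\to X_i$ for the projections. Put $\omega_0:=\pi_1^*\omega_1+\pi_2^*\omega_2$ and, because cross-terms of too high bidegree in one factor vanish,
\[\omega_0^N=\binom{N}{n_1}\,\pi_1^*\omega_1^{n_1}\wedge\pi_2^*\omega_2^{n_2},\qquad \int_{X_1\times X_2}\omega_0^N=\binom{N}{n_1}.\]
Let $c_0:=\binom{N}{n_1}^{1/N}$ and $\tilde\omega:=\omega_0/c_0$, so $\int_{X_1\times X_2}\tilde\omega^N=1$. We use $\tilde\omega$ to define all norms on the product; a different choice only alters the constants.

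For $(i)\Longrightarrow(ii)$, Theorem \ref{t:W-Holder-MA} gives H\"older continuous $\omega_i$-psh functions $u_i$ with $(\ddc u_i+\omega_i)^{n_i}=\mu_i$. The function $u:=\pi_1^*u_1+\pi_2^*u_2$ is H\"older continuous and $\omega_0$-psh, so $v:=u/c_0$ is H\"older continuous and $\tilde\omega$-psh. The same bidegree identity as above yields
\[(\ddc v+\tilde\omega)^N=c_0^{-N}\binom{N}{n_1}\,\pi_1^*\!\bigl(\ddc u_1+\omega_1\bigr)^{n_1}\wedge\pi_2^*\!\bigl(\ddc u_2+\omega_2\bigr)^{n_2}=\mu_1\times\mu_2=\mu.\]
Applying the direction (h1)$\Longrightarrow$(h5) of Theorem \ref{t:W-Holder-MA} on $(X_1\times X_2,\tilde\omega)$ then gives (ii).

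For $(ii)\Longrightarrow(i)$, by symmetry it is enough to prove that $\mu_1$ is $W^*(X_1)$-H\"older continuous. Let $f\in\Cc^\infty(X_1)$ with $\|f\|_{*,X_1}\leq 1$, and pick a closed positive $(1,1)$-current $T_1$ on $X_1$ with $df\wedge\dc f\leq T_1$ and $\|T_1\|_{X_1}^{1/2}$ close to the minimum. Set $F:=\pi_1^*f$ and $T:=\pi_1^*T_1$. Pullback commutes with $d$ and $\dc$, so $dF\wedge\dc F\leq T$, and $T$ is closed, positive of bidegree $(1,1)$ on the product. A direct bidegree computation (only the summand of total type $(n_1-1,n_2)$ in $\tilde\omega^{N-1}$ pairs nontrivially with $\pi_1^*T_1$) gives
\[\|T\|_{X_1\times X_2}=c_0^{-(N-1)}\binom{N-1}{n_1-1}\,\|T_1\|_{X_1},\]
while $\|F\|_{L^1(X_1\times X_2)}=\|f\|_{L^1(X_1)}$ using $\int_{X_2}\omega_2^{n_2}=1$. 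Hence $\|F\|_{*,X_1\times X_2}\leq C\|f\|_{*,X_1}$ for a constant $C$ depending only on $n_1,n_2$. Since $\mu_2$ is a probability measure, $\mu(F)=\mu_1(f)$. Applying the $W^*(X_1\times X_2)$-H\"older estimate to $F/C$ then yields $|\mu_1(f)|\leq c' \|f\|_{L^1(X_1)}^\alpha$, proving $\mu_1$ is $W^*(X_1)$-H\"older continuous.

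The main obstacle is purely notational: keeping track of the normalization constant $c_0$ and the bidegree bookkeeping that forces the Monge-Amp\`ere expansion on $X_1\times X_2$ to collapse to the single mixed term $\pi_1^*(\ddc u_1+\omega_1)^{n_1}\wedge\pi_2^*(\ddc u_2+\omega_2)^{n_2}$. Once these are in place, both implications follow at once from Theorem \ref{t:W-Holder-MA}.
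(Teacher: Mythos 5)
Your proof is correct, and the backward direction (ii) $\Rightarrow$ (i) is essentially the paper's argument: pull back a test function by $\pi_1$, control $\|\pi_1^*f\|_*$ by $\|f\|_{*,X_1}$, and use that $\mu_2$ is a probability measure (you supply the mass computation for $\pi_1^*T_1$ that the paper dismisses with ``it is clear''). The forward direction, however, takes a genuinely different route. The paper writes $\mu=\big(\ddc(u_1\circ\pi_1)+\pi_1^*\omega_1\big)^{n_1}\wedge\big(\ddc(u_2\circ\pi_2)+\pi_2^*\omega_2\big)^{n_2}$ as a \emph{mixed} Monge--Amp\`ere measure of H\"older continuous potentials and invokes Proposition \ref{p:h2-h6} (the local statement for $\1_K\cdot\ddc u_1\wedge\dots\wedge\ddc u_n$, which yields (h6) and hence (h5)). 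You instead observe that, after the binomial expansion of $\big(\ddc(\pi_1^*u_1+\pi_2^*u_2)+\omega_0\big)^{N}$, only the term of bidegree $(n_1,n_1)\times(n_2,n_2)$ survives, so $\mu$ is the \emph{full} Monge--Amp\`ere measure of the single H\"older continuous $\tilde\omega$-psh function $(\pi_1^*u_1+\pi_2^*u_2)/c_0$; then (h1) $\Rightarrow$ (h5) of Theorem \ref{t:W-Holder-MA} applies directly on the product. This buys you independence from the mixed-measure machinery of Proposition \ref{p:h2-h6} at the cost of the normalization bookkeeping with $c_0$ and of justifying the binomial expansion and the vanishing of the cross terms $\pi_1^*(\ddc u_1+\omega_1)^k$ for $k>n_1$ in the Bedford--Taylor sense --- both standard for continuous local potentials, so there is no gap. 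The paper's route, by contrast, generalizes for free to mixed measures and does not require recognizing $\mu$ as a single Monge--Amp\`ere measure.
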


\begin{proof} 
Denote by $\pi_i:X_1\times X_2\to X_i$ the natural projection for $i=1,2$. Consider the K\"ahler form $\omega:=\lambda(\pi_1^*(\omega_1)+\pi_2^*(\omega_2))$ on $X$ with the normalization factor  $\lambda>0$ chosen so that $\int_X\omega^{n_1+n_2}=1$. The $W^*(X)$-H\"older continuity doesn't depend on this choice.

\smallskip\noindent
{\bf (i) $\Longrightarrow$ (ii).} Assume the property (i). We deduce from the equivalence (h5) $\Longleftrightarrow$ (h1) that $\mu_i = (dd^c u_i + \omega_i)^{n_i}$, where $u_i$ is a
H\"older continuous $\omega_i$-psh function on $X_i$ for $i=1,2$. It follows that 
$$\mu=\mu_1\times\mu_2= \big(\ddc (u_1\circ \pi_1)+\pi_1^*(\omega_1)\big)^{n_1}\wedge  \big(\ddc (u_2\circ \pi_2)+\pi_2^*(\omega_2)\big)^{n_2}.$$
Thus, (ii) follows from Proposition \ref{p:h2-h6}.

\smallskip\noindent
{\bf (ii) $ \Longrightarrow$ (i).} Assume (ii) is true. We only prove the property (i) for $\mu_1$ as the proof for $\mu_2$ is similar. Let $f_1 \in W^*(X_1) \cap \Cc^\infty(X_1)$ such that $\|f_1\|_*\leq 1$. 
Define $f:=f_1\circ \pi_1$. It is clear that $\|f\|_*$ is bounded. By the $W^*(X)$-H\"older property of $\mu$ applying for $f$ we have 
$$|\mu_1(f_1)| = |\mu(f)|    \leq  c \|f\|_{L^1(\mu)}^\alpha = c\|f_1\|_{L^1(\mu_1)}^\alpha$$for some positive constants $c$ and $\alpha$. Thus, $\mu_1$ is $W^*(X_1)$-H\"older continuous.
\end{proof}

\begin{remark} \rm \label{r:final}
Let $\mu$ be the measure as in Theorems \ref{t:main}, \ref{t:main-local}, \ref{t:DN-KN} or \ref{t:W-Holder-MA}.
According to these theorems, Lemma \ref{l:W-MA-C0} and Remark \ref{r:MA-int}, we can extend $\mu$ to a linear continuous functional on $W^*(X)$ or $W^*(\Omega)$ and the estimates in Definitions \ref{d:W-Holder} and \ref{d:W-Holder-local} are satisfied for every $f\in W^*(X)$ or $f\in W^*(\Omega)$ respectively. 
\end{remark}

\section{Comparison of two capacities}

Here we wish to show that a natural capacity for the  complex Sobolev space $W^*$,   introduced by Vigny \cite{Vig} and called "functional capacity", is well comparable to the Bedford-Taylor capacity  $\capa_\omega $.  

The functional capacity on a compact K\"ahler manifold $X$  is defined as follows. 
For a Borel set $E \subset X$ put
$$
	\mathcal{L} (E) := \big\{v \in W^* (X): v \leq -1 \quad \text{a.e. on some neighborhood of } E, v \leq 0 \text{ on }X \big\}.
$$
The {\it functional capacity} of the set $E$ is given by
$$
	C(E) := \inf \big\{ \|v\|_*^2 : v \in \mathcal{L} (E) \big\}.
$$
As noted in \cite{Vig}, taking $v=-1$ gives $C(E)\leq 1$. 
Recall from Section 3 that the Bedford-Taylor capacity of $E$, with respect to a K\"ahler form $\omega$,  is defined by
$$
	\capa_\omega (E) := \sup \Big\{\int_E (\omega + dd^c u)^n : u\in \PSH(\omega ), -1\leq u \leq 0 \Big\}.
$$

\begin{proposition} There exists a constant $A>0$ depending only on $(X, \omega )$ such that 
$$
	\frac{1}{A} \capa_\omega (E) \leq C(E) \leq A \left[ \capa_\omega (E) \right]^\frac{1}{n}.
$$
\end{proposition}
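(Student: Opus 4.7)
The two inequalities have rather different character, so I will prove them separately: the upper bound via an explicit capacitor built from the global extremal function, the lower bound via a weighted integration-by-parts estimate on the $W^*$-norm.

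For the upper bound $C(E)\leq A\,\capa_\omega(E)^{1/n}$, put $M:=\sup_X V_E^*$, where $V_E^*$ is the Siciak--Zahariuta-type function from Lemma \ref{l:extremal}. If $M<2$, Lemma \ref{l:extremal}(c) gives $\capa_\omega(E)$ bounded below by a universal positive constant, so the bound $C(E)\leq 1$ makes the inequality trivial. For $M\geq 2$, the function $w:=V_E^*-M$ is $\omega$-psh with $-M\leq w\leq 0$, and for small $\varepsilon>0$ the function $v_\varepsilon:=w/M-\varepsilon$ lies in $\mathcal{L}(E)$: by upper semicontinuity of $V_E^*$ and the identity $V_E^*=0$ quasi-everywhere on $E$, the open set $\{V_E^*<\varepsilon M\}$ is a neighborhood of $E$ on which $v_\varepsilon\leq -1$ almost everywhere. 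Applying the construction inside the proof of Lemma \ref{l:w-w-norm-x} to $w$ yields a closed positive current $T_w\geq dw\wedge d^cw$ with $\|T_w\|\leq \|w\|_{L^\infty}=M$; by homogeneity $dv_\varepsilon\wedge d^cv_\varepsilon\leq T_w/M^2$ has mass $\leq 1/M$, so the $T$-part of $\|v_\varepsilon\|_*$ is at most $1/\sqrt{M}$. The $L^1$-part is bounded using the Guedj--Zeriahi estimate $\int(M-V_E^*)\omega^n\leq c_0$ from \cite[Cor.~3.3]{GZ05} (recalled in the proof of Lemma \ref{l:extremal}(c)), giving $\|v_\varepsilon\|_{L^1}\leq c_0/M+\varepsilon$. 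Letting $\varepsilon\to 0$ and using $1/M\leq\capa_\omega(E)^{1/n}$ from Lemma \ref{l:extremal}(c), I conclude $C(E)\leq \|v_\varepsilon\|_*^2\lesssim 1/M\leq \capa_\omega(E)^{1/n}$.

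For the lower bound $\capa_\omega(E)\leq A\,C(E)$, the crucial analytic input is the estimate
\begin{equation*}
\int_X v^2\,(\omega+\ddc u)^n\ \leq\ C_n\,\|v\|_*^2
\end{equation*}
valid for every smooth $v$ with $\|v\|_{L^\infty}\leq 1$ and every $u\in\PSH(X,\omega,[-1,0])$. Granting this, for any $v\in\mathcal{L}(E)$ the truncation $\max(v,-1)$ still lies in $\mathcal{L}(E)$ with no larger $W^*$-norm (the same dominating current $T$ works since $d(\max(v,-1))$ vanishes where $v\leq -1$, and the $L^1$-norm can only decrease). I may therefore assume $-1\leq v\leq 0$, so $v^2\geq 1$ almost everywhere on the neighborhood of $E$ where $v\leq -1$, and thus $\int_E(\omega+\ddc u)^n\leq \int_X v^2\,(\omega+\ddc u)^n\leq C_n\|v\|_*^2$. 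Taking the supremum over $u$ and the infimum over $v\in\mathcal{L}(E)$ yields $\capa_\omega(E)\leq C_n\,C(E)$.

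The main obstacle is the displayed inequality above, since a naive single-step Stokes-plus-Cauchy--Schwarz only produces a bound of the form $\lesssim \|v\|_*+\|v\|_*^2$, which loses a power when $\|v\|_*$ is small. I plan to prove it by downward induction on $j$ for the auxiliary quantities $I^{(j)}:=\int_X v^2\,\omega^j\wedge(\omega+\ddc u)^{n-j}$, with base case $I^{(n)}=\|v\|_{L^2}^2\lesssim \|v\|_*^2$ by the Poincar\'e--Wirtinger equivalence recalled in the introduction. For the step, write $I^{(j)}=I^{(j+1)}+E^{(j)}$ with $E^{(j)}:=\int_X v^2\,\ddc u\wedge S$ and $S:=\omega^j\wedge(\omega+\ddc u)^{n-j-1}$ closed; Stokes gives $E^{(j)}=-2\int v\,dv\wedge d^cu\wedge S$, and the pointwise Cauchy--Schwarz $|du\wedge d^cv\wedge S|^2\leq (du\wedge d^cu\wedge S)(dv\wedge d^cv\wedge S)$ followed by the integral Cauchy--Schwarz with weight $v$ yields $|E^{(j)}|\leq 2\sqrt{\|T\|\,Y^{(j)}}$, where $Y^{(j)}:=\int_X v^2\,du\wedge d^cu\wedge S$. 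A parallel Stokes-plus-Cauchy--Schwarz on $Y^{(j)}$ (using $du\wedge d^cu=\tfrac12\ddc(u^2)-u\ddc u$ and absorbing $|\int_X v^2 u\ddc u\wedge S|\leq I^{(j)}+I^{(j+1)}$ via $|u|\leq 1$ together with $\ddc u=(\omega+\ddc u)-\omega$) produces the self-referential bound $Y^{(j)}\leq 2\|T\|+2\sqrt{\|T\|\,Y^{(j)}}+I^{(j)}+I^{(j+1)}$, which solves to $Y^{(j)}\leq 8\|T\|+2I^{(j)}+2I^{(j+1)}$. Inserting this into the bound on $|E^{(j)}|$ and applying sharp AM--GM $\sqrt{\|T\|\,I^{(j)}}\leq \eta\,I^{(j)}+\|T\|/(4\eta)$ with $\eta$ small enough to absorb the $I^{(j)}$-term on the right into the left closes the induction with the clean recursion $I^{(j)}\leq 3\,I^{(j+1)}+C\|T\|$, which iterates to $I^{(0)}\leq C_n\|v\|_*^2$ since $\|T\|\leq\|v\|_*^2$.
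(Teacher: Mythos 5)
Your proof of the upper bound $C(E)\leq A\,\capa_\omega(E)^{1/n}$ follows the paper's route almost verbatim (the competitor $(V_E^*-M)/M-\varepsilon$, a closed positive current of mass $O(1/M)$ dominating the gradient term, and the global Alexander--Taylor inequality $1/M\leq \capa_\omega(E)^{1/n}$), but it contains one genuine gap: for a general Borel set $E$ the open set $\{V_E^*<\varepsilon M\}$ need \emph{not} contain $E$. The definition of $V_E$ only forces $V_E\leq 0$ on $E$, and the upper semicontinuous regularization $V_E^*$ may be strictly positive on a pluripolar subset of $E$; the phrase ``$V_E^*=0$ quasi-everywhere on $E$'' concedes exactly the points at which your set fails to be a neighborhood, so $v_\varepsilon$ need not belong to $\mathcal{L}(E)$. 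The paper sidesteps this by first using the regularity of both capacities to reduce to compact \emph{regular} sets $K$, for which $V_K$ is continuous and vanishes on all of $K$; you need this reduction (or an argument that $C$ ignores pluripolar sets) before your competitor is admissible. A minor related slip: Lemma \ref{l:extremal}(c) carries the hypothesis $\sup_X V_K^*\geq 1$, so the case $M\leq 1$ must be treated as in the paper's Case 1 (showing $\capa_\omega(K)=1$ directly) rather than by citing that lemma.

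For the lower bound the paper simply quotes Vigny \cite{Vig}, so your argument is genuinely different content, and its analytic core is sound: the induction on $I^{(j)}=\int_X v^2\,\omega^j\wedge(\omega+\ddc u)^{n-j}$, the Stokes/Cauchy--Schwarz bound $|E^{(j)}|\leq 2\sqrt{\|T\|\,Y^{(j)}}$, the self-referential estimate on $Y^{(j)}$ via $du\wedge \dc u=\tfrac{1}{2}\ddc(u^2)-u\,\ddc u$, and the absorption by AM--GM all close correctly, with the base case supplied by the Poincar\'e--Wirtinger equivalence recalled in the introduction. Two points are glossed over. First, your key inequality is established for smooth $v$, whereas $\max(v,-1)$ with $v\in\mathcal{L}(E)$ is not smooth; one needs a regularization of $v$ together with convergence of $\int(\cdot)^2(\omega+\ddc u)^n$ along it, in the spirit of Lemma \ref{l:W-MA-C0} and \cite{DMV}. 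Second, and more seriously, ``$v\leq -1$ Lebesgue-a.e.\ on a neighborhood $U$ of $E$'' does not by itself yield $v^2\geq 1$ $(\omega+\ddc u)^n$-a.e.\ on $U$, since the Monge--Amp\`ere measure of a bounded $\omega$-psh function can be singular with respect to Lebesgue measure; one must pass to the quasi-continuous representative of $v$ and use that $(\omega+\ddc u)^n$ charges no pluripolar set (Remark \ref{r:MA-int}). Both issues are repairable with the machinery of \cite{DMV,Vig}, but as written the deduction $\capa_\omega(E)\leq C_n\,C(E)$ is incomplete.
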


\begin{proof}
The first inequality was obtained by Vigny  \cite[Theorem~28]{Vig}, so we only deal with the second inequality.
Since both capacities are regular (see \cite[Theorem~4.2]{GZ05} and \cite[Theorem~30]{Vig})  we only need to show this inequality for any compact regular set $K \subset X$ instead of $E$. 
Here, the regularity of $K$ means that the extremal functions $h_K$ and $V_K$, defined in  Section 3, are continuous. Therefore,
$$
	\capa_\omega (K) = \int_K (\omega + dd^c h_K)^n = \int_X - h_K (\omega + dd^c h_K)^n.
$$
Set  $M:= \sup_X V_K\geq 0$. Without loss of generality, we may assume that $M<+\infty$, otherwise both capacities vanish (see \cite{ko05}, \cite{Vig}).
If $M \leq 1$, then $h_K = V_K -1$. By Stokes' theorem, the definition of $\capa_\omega(K)$ and the above identities, we have
$$
	\int_X \omega ^n = \int_K (\omega+ dd^c V_K)^n \leq \capa_\omega (K) \leq \int_X(\omega+\ddc h_K)^n=\int_X \omega ^n.
$$
Hence,
$\capa_\omega (K) = \capa_\omega (X)$ and  the desired inequality follows immediately.

It remains to consider the case $M \geq 1$. We have
$$
 	-1 \leq u:= \frac{V_K- M}{M} \leq h_K \leq 0.
$$
Hence, for any $\varepsilon >0$, one has $u -\varepsilon \in \mathcal{L} (K)$, see Lemma \ref{l:w-w-norm-x}. 
So, by the definition of $C(K)$, we have  $C(K)\leq \|u\|_*^2$.
We now estimate the  $W^*(X)$-norm of $u$.  Since $\sup_X (V_K - M) =0$,
$$
\|u\|_{L^1} =	\int_X -u \omega^n \leq \frac{A'}{M}  
$$
for some constant $A'>0$ independent of $u$ as $\PSH_0(X,\omega)$ is compact in $L^1(X)$.
Furthermore, since $V_K \in \PSH(\omega )$, we have $dd^c u \geq -\frac{\omega }{M} $ and thus
$$
	T := \frac{\omega}{M} + dd^c u +  \frac{1}{2} dd^c u^2  ={\omega\over M} +\Big(1-{u\over 2}\Big)\ddc u + du \wedge d^c u \geq du \wedge d^c u. 
$$
From those two facts and the definition of $\|u\|_*$, we get
$$
	\|u\|_*^2 \leq  2(\|u\|_{L^1}^2 + \|T\|) \leq  \frac{2A'^2}{M^2} + \frac{2}{M} \leq \frac{A}{M}
$$
for some constant $A>0$. 
The  global Alexander-Taylor inequality  (see Lemma \ref{l:extremal} (c) above)
says that 
$$
\frac{1}{M} \leq [\capa_\omega (K)]^\frac{1}{n},
$$
which, applied to the estimate for  $\|u\|_*^2 $, gives
$$
C (K) \leq \|u\|_*^2 \leq A [\capa_\omega (K)]^\frac{1}{n} .
$$
This is the desired inequality.
\end{proof}

\end{document}